\documentclass[11pt, a4paper]{article}
\usepackage{amsmath, amssymb, amsthm}
\usepackage{geometry}
\usepackage{hyperref}
\usepackage{xcolor}

\newtheorem{theorem}{Theorem}[section]
\newtheorem{lemma}[theorem]{Lemma}
\newtheorem{proposition}[theorem]{Proposition}

\newtheorem{example}[theorem]{Example}
\newtheorem{definition}[theorem]{Definition}
\newtheorem{remark}[theorem]{Remark}

\def\C{\mathbb{C}}
\def\F{\mathbb{F}}
\def\K{\mathbb{K}}
\def\FR{\mathcal{R}}

\title{Rosenbrock's Theorem characterizes Pr\"{u}fer domains}
\author{Vanni Noferini\thanks{Supported by a Research Council of Finland grant (decision number 370932).
Department of Mathematics and Systems Analysis, Aalto University, P.O. Box 11100, FI-00076, Aalto, Finland. E-mail: {vanni.noferini@aalto.fi}}
}
\date{11 September 2026}

\begin{document}

\maketitle

\begin{abstract}
Under coprimality assumptions on certain submatrices, Rosenbrock's Theorem relates the Smith form of a matrix $P$ over an elementary divisor domain $\FR$ to the Smith-McMillan form of a matrix $G$ over the field of fractions of $\FR$, where $G$ is a Schur complement in $P$. If $\FR$ is not an elementary divisor domain, Rosenbrock's Theorem is not directly applicable in its original form, because not every matrix is unimodularly equivalent to a matrix in Smith form. In this paper, we state an ideal-theoretic version of Rosenbrock's Theorem that is meaningful over any integral domain, and we show that it is equivalent to the classic formulation over an elementary divisor domain. Moreover, we give a characterization of Pr\"{u}fer domains as those integral domains over which the ideal-theoretic version of Rosenbrock's Theorem holds for every matrix satisfying the assumptions. In particular, the theorem does not hold for every
admissible matrix over $\C[x_1,\dots,x_d]$ when $d\geq 2$.
{However, when $d\leq 3$, it holds for square
matrices in a nonempty Zariski-open subset of the
coefficient space, having fixed the block sizes and a total degree
bound.} Finally, we prove that, if $\FR$ is an integral domain such that every right invertible matrix can be completed to a unimodular matrix, then every matrix $P$ that satisfies the assumptions of the ideal-theoretic Rosenbrock's Theorem and realizes the same Schur complement $G$ shares the same ideal-theoretic generalization of the Smith form.
\end{abstract}

\textbf{Keywords:} Rosenbrock's Theorem, Smith ideal, Pr\"{u}fer domain, integral domain, Hermite domain, polynomial ring

\bigskip

\textbf{Mathematics Subject Classification:} 13F05, 15A54, 13C10, 15A21, 93B25

\section{Introduction}

In linear systems theory, Rosenbrock's Theorem is a result that relates the invariant factors of a polynomial system matrix $P$ to those of its ``transfer function" (an engineering term that denotes the Schur complement in $P$ of a leading principal submatrix, assumed invertible) $G$. Motivated by control-theoretic applications, Rosenbrock first proved this theorem \cite[Theorem 4.1, Chapter 3]{Rosenbrock}, whose statement assumes coprimality of certain submatrices of $P$, over the univariate polynomial rings $\mathbb{R}[x]$ and $\mathbb{C}[x]$ and using ad hoc arguments. Rosenbrock's Theorem was later proved in much wider generality, first by Coppel \cite{Coppel1974} for matrices over any principal ideal domain and more recently by Dopico, Noferini and Zaballa \cite{DNZ2025} for matrices over any elementary divisor domain. Because one of the equivalent definitions of an elementary divisor  domain \cite{Friedland2015, Kaplansky1949} is precisely an integral domain over which every matrix is unimodularly equivalent to one in Smith form \cite{Coppel1974,DNZ2025,Friedland2015,Kaplansky1949,Rosenbrock} (and hence every matrix over the associated  field of fractions is unimodularly equivalent to one in Smith-McMillan form \cite{Coppel1974,DNZ2025,Rosenbrock}), the results in \cite{DNZ2025} imply that Rosenbrock's Theorem, in its original version, holds over every ring for which its statement makes sense.

Nonetheless, since the invariant factors in a Smith form are related to the ideals generated by the minors of a given size, the statement of Rosenbrock's Theorem can be translated into the language of ideals. This observation potentially reopens the question of its applicability, that was closed by \cite{DNZ2025} for its classic formulation. There are at least two reasons to study this problem. One is purely algebraic: The ideal-theoretic version of Rosenbrock's Theorem that we propose in this article is a well posed mathematical statement over every integral domain, and thus characterizing those integral domains for which the theorem holds is a sensible mathematical question within ring theory. The second comes once again from systems theory: Multidimensional systems \cite{AGP2025} are modelled by multivariate polynomial rings such as $\mathbb{R}[x_1,\dots,x_d]$ and $\mathbb{C}[x_1,\dots,x_d]$. It is widely known that these rings are not elementary divisor domains, and thus the original formulation of Rosenbrock's Theorem is inapplicable; but one still may wonder (and scholars have wondered) if it can be replaced by some other tool with a similar flavour. 

The main goal of the present manuscript is to construct an ideal-theoretic generalization of Rosenbrock's Theorem, that is equivalent to the original formulation for an elementary divisor domain. We replace the scalar invariant factors by certain ideals, that we call \emph{Smith ideals}, and are defined as ideal quotients of two consecutive determinantal ideals. In turn, Smith ideals can be used to define \emph{numerator and denominator ideals} associated to any matrix over the field of fractions of the base ring.  The main results of our paper include:
\begin{itemize}
\item Theorem \ref{thm:rosenbrock}, which answers the question posed above. Namely, Theorem \ref{thm:rosenbrock} establishes that, given an integral domain $\FR$, the ideal-theoretic Rosenbrock's Theorem holds for every matrix  over $\FR$ satisfying the assumptions if and only if $\FR$ is a Pr\"{u}fer domain \cite{FHP1997,gilmer1968}. This adds a new characterization of Pr\"{u}fer domains to the many already available. From the viewpoint of systems theory, the result is in some sense negative because polynomial rings with coefficients in a field are not Pr\"{u}fer unless $d=1$. Nonetheless, we hope that the theoretical constructions leading to the statement of Rosenbrock's Theorem may still be useful for systems theorists as well, because what Theorem \ref{thm:rosenbrock} states is that, for non-Pr\"{u}fer domains, Rosenbrock's Theorem does not hold for \emph{every} minimal matrix pair $(P,G)$, but it does not a priori forbid zero/pole recovery (in the ideal-theoretic sense described in this paper) for \emph{specific} subclasses of matrix pairs. Potentially, this opens the line of research of determining for which multidimensional system matrices the ideal-theoretic version of Rosenbrock's Theorem holds, or of investigating whether some weaker variant may still apply more generally.
\item Theorem \ref{prop:vectorcase}, which identifies one subclass of matrix pairs for which the ideal-theoretic Rosenbrock's Theorem holds over \emph{every} integral domain: Those such that $G$ is either a row vector or a column vector (or a scalar). In other words, for the theorem to fail, $G$ must have at least two rows and at least two columns.
\item Theorem \ref{prop:easy} provides, over integrally closed domains, a result dual to Theorem \ref{prop:vectorcase}, assuming that $G$ is precisely one row and one column smaller than $P$. Potentially, both Theorems \ref{prop:vectorcase} and \ref{prop:easy} may be applicable to system theory.
\item Proposition \ref{rem:generic}, which states that, if $\FR=\mathbb{F}[x_1,\dots,x_d]$ for an algebraically closed field $\mathbb{F}$ and $d \leq 3$, then Rosenbrock's Theorem holds for almost every square matrix in the Zariski topology {(having fixed the block sizes and a total degree bound)}.
\item Theorem \ref{thm:allParethesame}, a result that may have useful implications for system theory:  Over any integral domain where every right invertible matrix can be completed to a unimodular matrix (including polynomial rings, by the Quillen-Suslin Theorem \cite{Quillen1976}) all minimal realizations (that is, all matrices $P$ satisfying the assumptions of the ideal-theoretic Rosenbrock's Theorem \ref{thm:rosenbrock}) of the same Schur complement $G$ share identical {(up to appending if needed an initial trivial subsequence to the shorter lists)} Smith ideals, even when these do not allow exact recovery of the numerator and denominator ideals of the Schur complement.
\end{itemize}
The article is organized as follows: Section \ref{sec:prel} briefly recalls basic concepts about fractional and inverse ideals, for the benefit of those readers who may not be experts in ring theory. In Section \ref{sec:smith} we introduce and develop essential properties of the Smith ideals for matrices over an integral domain and their extension to matrices in the associated field of fractions, as well as the numerator ideals and the denominator ideals. Finally, Section \ref{sec:rosenbrock} contains the statement and proof of the main results, as well as some examples and remarks.

\section{Preliminaries}\label{sec:prel}

Let us first recall some basic definitions from ring theory \cite{Atiyah1969}.

\begin{definition}
Let $\mathcal{R}$ be a commutative ring. A subset $I \subseteq \mathcal{R}$ is an ideal if it is an additive subgroup of $\FR$ and $r x \in I$ for all $r \in \mathcal{R}$ and $x \in I$.
\end{definition}

An ideal $I$ is called finitely generated if there exist $g_1,\dots,g_\ell \in \mathcal{R}$ such that every $x \in I$ is an $\mathcal{R}$-linear combination of $g_1,\dots,g_\ell$; in this case we write $I=\langle g_1,\dots,g_\ell \rangle$. Observe in particular that $\FR=\langle 1 \rangle$ and that $1 \in I$ if and only if $I=\FR$. If $\FR$ is an integral domain with field of fractions $\K$ and $x \in \K$ and $S \subseteq \FR$ is a subset, we denote $x S := \{ x a \mid a \in S \}$ and $\langle x \rangle := x \FR$.

\begin{definition}
Let $\mathcal{R}$ be an integral domain and $\mathbb{K}$ its field of fractions. An $\mathcal{R}$-module $I\subseteq \mathbb{K}$ is a fractional ideal of $\mathcal{R}$ if there exists a non-zero element $d \in \mathcal{R}$ such that $d I \subseteq \mathcal{R}$.
\end{definition}

Note that if $I$ is a fractional ideal and $d I \subseteq \FR$, then $dI$ is an (integral) ideal of $\FR$.

\begin{definition}
If $I$ and $J$ are two fractional ideals of $\mathcal{R}$, then:
\begin{enumerate}
\item The ideal sum of $I$ and $J$ is $I+J=\{ a + b \mid a \in I, b \in J  \}$;
\item The ideal product of $I$ and $J$ is $I \cdot J := \left\{ \sum_{i=1}^\ell a_i b_i \mid a_i \in I, b_i \in J, 1 \leq \ell \in \mathbb{N} \right\}$;
\item  If $J \neq \langle 0 \rangle$, the ideal quotient of $I$ by $J$, denoted $I:J$, is  $\{ x \in \K \mid xJ \subseteq I \}$;
\item If $I \neq \langle 0 \rangle$, the inverse ideal of $I$ is the ideal quotient $I^{-1}:=\FR : I$. Generally, it is clear by definition that $I \cdot I^{-1} \subseteq \FR$, and the ideal $I$ is called \emph{invertible} if $I \cdot I^{-1} = \FR$.
\end{enumerate}

\end{definition}

Clearly, an ideal quotient is itself a fractional ideal. Indeed, suppose that $x,y \in I:J$ and $r \in \FR$. Then, since $I$ is itself a fractional ideal, it holds $(x+y) J \subseteq I$ and $rx J \subseteq I$. In addition, if $0 \neq d\in \FR$ is such that $d I \subseteq \FR$ and $0 \neq y \in J \cap \FR$ (which always exists when $J \neq \langle 0 \rangle$), it is clear that $(dy)(I:J) \subseteq \FR$. Similarly one can show that $I+J$, $I \cdot J$ and $I^{-1}$ are fractional ideals.

We recall below some useful properties related to inverse ideals \cite{gilmer1968} that will be useful later.

\begin{proposition}\label{prop:invertible}
Let $I,J\neq \langle 0 \rangle$ be fractional ideals of the integral domain $\FR$. It always holds $I \cdot J^{-1} \subseteq I : J$. Moreover, if $J$ is invertible, then $I \cdot J^{-1} = I : J$.
\end{proposition}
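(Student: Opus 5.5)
The plan is to prove both claims straight from the definitions of ideal product, ideal quotient and inverse ideal, using only associativity of the ideal product and the fact that $\FR$ acts as the identity on fractional ideals, i.e.\ $\FR\cdot K = K$ for every fractional ideal $K$.

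For the inclusion $I\cdot J^{-1}\subseteq I:J$, first reduce to generators. Since $I:J$ is an $\FR$-module, hence closed under addition, it suffices to show that every product $ab$ with $a\in I$ and $b\in J^{-1}$ lies in $I:J$. So take any $y\in J$. By definition of $J^{-1}=\FR:J$ we have $by\in\FR$. Because $I$ is an $\FR$-module, it follows that $aby=(by)a\in I$. Hence $abJ\subseteq I$, which is exactly the statement $ab\in I:J$.

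For the equality when $J$ is invertible, it remains to show $I:J\subseteq I\cdot J^{-1}$. Take $x\in I:J$, so that $xJ\subseteq I$; as sets of products this also gives $\langle x\rangle\cdot J\subseteq I$. Multiplying both sides by $J^{-1}$ and using associativity together with $J\cdot J^{-1}=\FR$, we get
\[
\langle x\rangle = \langle x\rangle\cdot\FR = \langle x\rangle\cdot (J\cdot J^{-1}) = (\langle x\rangle\cdot J)\cdot J^{-1}\subseteq I\cdot J^{-1}.
\]
In particular $x\in I\cdot J^{-1}$.

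To make this fully concrete, one can write $1=\sum_i y_i z_i$ with $y_i\in J$ and $z_i\in J^{-1}$. Then $x=\sum_i (xy_i)z_i$, and each $xy_i$ lies in $I$, so $x\in I\cdot J^{-1}$ directly. No step is a real obstacle. The only care needed is checking that the product of fractional ideals is monotone and associative, which is immediate from the definition via finite sums of products. The explicit decomposition of $1$ sidesteps even that.
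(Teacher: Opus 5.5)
Your proof is correct and follows essentially the same argument as the paper. For the first inclusion you check that $abJ\subseteq I$ for each product with $a\in I$, $b\in J^{-1}$, and for the reverse inclusion you use $\langle x\rangle = (\langle x\rangle\cdot J)\cdot J^{-1}\subseteq I\cdot J^{-1}$, with the decomposition $1=\sum_i y_i z_i$ simply unpacking that same step concretely.
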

\begin{proof}
Let $x \in I \cdot J^{-1}$, then $x=\sum_i a_i b_i$, where $a_i \in I$ and $b_i \in \K$ satisfy $b_i c \in \FR$ for all $c \in J$. Then, $x c = \sum_i a_i (b_i c) \in I$ because it is an $\FR$-linear combination of elements of $I$, and hence $x \in I:J$.

Let now $x \in I :J$ and suppose $J$ is invertible. Then $x J \subseteq I$ and hence $x \FR = x J \cdot J^{-1} \subseteq I \cdot J^{-1}$. It follows that $x \in x \FR \subseteq I \cdot J^{-1}$.
\end{proof}

\begin{lemma}\label{lem:inverse2gen}
Let $\FR$ be an integral domain and let $r_1,r_2 \in \FR$ be such that $r_1 \neq 0 \neq r_2$. Then
\[   \langle r_1, r_2 \rangle^{-1} = \frac{1}{r_1 r_2} ( \langle r_1 \rangle \cap \langle r_2 \rangle).\]
\end{lemma}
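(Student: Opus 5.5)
We prove the equality by double inclusion, working directly from the definition $\langle r_1,r_2\rangle^{-1} = \FR : \langle r_1, r_2\rangle = \{x \in \K \mid x r_1 \in \FR,\ x r_2 \in \FR\}$. The second description holds because $x\langle r_1,r_2\rangle \subseteq \FR$ is equivalent to $xr_1, xr_2 \in \FR$: the ideal is generated by $r_1,r_2$ and $\FR$ is closed under $\FR$-linear combinations. The whole argument is then the substitution $y = r_1 r_2 x$, which is legitimate since $r_1 r_2 \neq 0$ in the integral domain $\FR$.

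For the inclusion ``$\subseteq$'', take $x$ with $xr_1 = a \in \FR$ and $xr_2 = b \in \FR$, and set $y := r_1 r_2 x$. Then $y = r_2 (x r_1) = r_2 a \in \langle r_2\rangle$ and $y = r_1 (x r_2) = r_1 b \in \langle r_1 \rangle$. Hence $y \in \langle r_1\rangle \cap \langle r_2\rangle$, and $x = \frac{1}{r_1 r_2} y$ lies in the right-hand side.

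For the inclusion ``$\supseteq$'', take $y \in \langle r_1\rangle\cap\langle r_2\rangle$, so $y = r_1 s = r_2 t$ for some $s,t \in \FR$. Put $x = y/(r_1 r_2)$. Then $x r_1 = y/r_2 = t \in \FR$ and $x r_2 = y / r_1 = s \in \FR$, so $x \in \langle r_1, r_2\rangle^{-1}$.

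No step is a genuine obstacle. The only points needing care are that $r_1 r_2 \neq 0$, so the division takes place in $\K$, and that $y/r_2 = t$ follows from $y = r_2 t$ by cancellation in the integral domain.
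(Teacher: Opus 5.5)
Your proof is correct and follows essentially the same double-inclusion argument as the paper, using the substitution $y = r_1 r_2 x$. The only cosmetic difference is in the ``$\supseteq$'' direction: you first reduce membership in $\langle r_1, r_2\rangle^{-1}$ to the conditions $xr_1, xr_2 \in \FR$, whereas the paper multiplies $x$ directly by an arbitrary element $cr_1 + dr_2$ of $\langle r_1, r_2 \rangle$.
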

\begin{proof}
Set $I:=\frac{1}{r_1 r_2} ( \langle r_1 \rangle \cap \langle r_2 \rangle)$. Suppose $x = \frac{a}{r_2} = \frac{b}{r_1} \in I$ and $y=c r_1 + d r_2 \in \langle r_1,r_2 \rangle$, for some $a,b,c,d \in \FR$. Then $x y = b c + ad \in \FR$, and hence $x \in \langle r_1, r_2 \rangle^{-1}$.

Conversely, suppose that $x \in \langle r_1, r_2 \rangle^{-1}$. Then, $ x r_1 \in \FR$ and hence $r_1 r_2 x = r_2 (x r_1) \in \langle r_2 \rangle$; it can be shown analogously that $r_1$ also divides $r_1 r_2 x$. Hence, $r_1 r_2 x \in \langle r_1 \rangle \cap \langle r_2 \rangle$.
\end{proof}

\section{Smith ideals, denominator ideals, and numerator ideals}\label{sec:smith}
Although the theory that we develop in this section is, in part, also applicable to more general rings, henceforth $\FR$ will denote an integral domain; when additional assumptions are made on $\FR$, they will be stated explicitly. Given a matrix $M \in \mathcal{R}^{m \times n}$, for $k=1,\dots,\min\{m,n\}$ we denote by $\mathcal{I}_k(M)$ the $k$-th determinantal ideal of $M$, or in other words the ideal of $\mathcal{R}$ generated by the $k \times k$ minors of $M$; we also conventionally define $\mathcal{I}_k(M)=\mathcal{R}$ when $k \leq 0$ and $\mathcal{I}_k(M)=\langle 0 \rangle$ when $k > \min \{m,n\}$. These ideals have sometimes also been called the Fitting ideals \cite[Section 20.2]{Eisenbud1995} of $M$. 
\begin{lemma}\label{lem:generalinclusions}
Let $\FR$ be an integral domain and $M \in \mathcal{R}^{m \times n}$. Then, for all $k=1,\dots,\min\{m,n\}$, it holds $\mathcal{I}_{k}(M) \subseteq \mathcal{I}_{k-1}(M) \cdot \mathcal{I}_1(M) \subseteq \mathcal{I}_{k-1}(M)$.
\end{lemma}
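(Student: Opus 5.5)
The plan is to use Laplace (cofactor) expansion along a row, together with the conventions $\mathcal{I}_0(M)=\FR$.

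We prove the first inclusion, $\mathcal{I}_k(M) \subseteq \mathcal{I}_{k-1}(M)\cdot \mathcal{I}_1(M)$. Since $\mathcal{I}_{k-1}(M)\cdot \mathcal{I}_1(M)$ is an ideal, it suffices to show that it contains each generator of $\mathcal{I}_k(M)$, namely each $k\times k$ minor of $M$. Fix such a minor, $\det M[\alpha,\beta]$ with $|\alpha|=|\beta|=k$, and expand it along its first row:
\[
\det M[\alpha,\beta]=\sum_{j=1}^{k} \pm\, m_{\alpha_1\beta_j}\,\det M[\alpha\setminus\{\alpha_1\},\beta\setminus\{\beta_j\}].
\]
In each term, $m_{\alpha_1\beta_j}$ is an entry of $M$ and so lies in $\mathcal{I}_1(M)$. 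The cofactor is a $(k-1)\times(k-1)$ minor of $M$ and so lies in $\mathcal{I}_{k-1}(M)$. Each summand is therefore a product of an element of $\mathcal{I}_{k-1}(M)$ and an element of $\mathcal{I}_1(M)$, and the finite sum lies in the ideal product by its definition.

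In the edge case $k=1$ we have $\mathcal{I}_0(M)=\FR$, so the claim is $\mathcal{I}_1(M)\subseteq \FR\cdot\mathcal{I}_1(M)=\mathcal{I}_1(M)$, which is trivial. The Laplace expansion is also consistent here, with the empty minor equal to $1$.

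The second inclusion, $\mathcal{I}_{k-1}(M)\cdot\mathcal{I}_1(M)\subseteq\mathcal{I}_{k-1}(M)$, holds because $\mathcal{I}_1(M)\subseteq\FR$ and $I\cdot\FR\subseteq I$ for any ideal $I$. Concretely, every element $\sum a_i b_i$ with $a_i\in\mathcal{I}_{k-1}(M)$ and $b_i\in\FR$ lies in $\mathcal{I}_{k-1}(M)$ by the ideal axioms.

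There is no real obstacle; the only care needed is the bookkeeping for $k=1$ via the convention $\mathcal{I}_0(M)=\FR$. Commutativity of $\FR$ is used implicitly, since it is needed for the Laplace expansion to be valid.
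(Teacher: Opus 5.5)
Your proposal is correct and takes the same approach as the paper, whose entire proof is the one-line appeal to the Laplace expansion of determinants. You have simply written out the cofactor expansion along a row, the trivial $k=1$ case, and the second inclusion $\mathcal{I}_{k-1}(M)\cdot\mathcal{I}_1(M)\subseteq\mathcal{I}_{k-1}(M)$ in full.
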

\begin{proof}
It follows by the Laplace expansion of determinants \cite[Section 0.8.9]{HoJo}. 
\end{proof}

\begin{definition}[Smith ideals]
Let $M \in \FR^{m \times n}$ have determinantal ideals $\mathcal{I}_k(M)$. For $k=1,\dots,\min\{m,n\}$, if $\mathcal{I}_{k-1}(M) \neq \langle 0 \rangle$ then the $k$-th \emph{Smith ideal} of $M$ is the ideal quotient
\[\mathcal{S}_k(M) : =\mathcal{I}_{k}(M) : \mathcal{I}_{k-1}(M).\] 
If instead $\mathcal{I}_{k-1}(M)=\langle 0 \rangle$, then $\mathcal{S}_k(M) := \langle 0 \rangle$.
\end{definition}

When $\mathcal{R}$ is an elementary divisor domain \cite{DNZ2025,Friedland2015, Kaplansky1949}, and thus in particular a B\'{e}zout domain, then all its finitely generated ideals are principal. In particular, over an elementary divisor domain we have $\mathcal{I}_k(M) = \langle \gamma_k(M) \rangle$ where $\gamma_k(M)$ is the $k$-th determinantal divisor of $M$ (greatest common divisor of the $k \times k$ minors of $M$); this implies in turn $\mathcal{S}_k(M) = \langle s_k(M) \rangle$, where $s_k(M)$ is the $k$-th invariant factor of $M$ (entry in the $(k,k)$ position in a Smith form of $M$). Therefore, Smith ideals can be seen as a generalization of the invariant factors. Unlike invariant factors, the Smith ideals are always well defined even over integral domains that are not elementary divisor domains. Recall that a square matrix with elements in $\FR$ is called \emph{unimodular} if its determinant is a unit of $\FR$. Theorem \ref{thm:unimodularinvariance} shows instead that, like invariant factors, Smith ideals are invariant by unimodular equivalence.

\begin{theorem}\label{thm:unimodularinvariance}
The determinantal ideals (and hence the Smith ideals) of a matrix $M \in \FR^{m \times n}$ are invariant by unimodular equivalence, i.e., $\mathcal{I}_k(M)=\mathcal{I}_k(UMV)$ for all $k$ and all unimodular $U \in \FR^{m \times m}, V \in \FR^{n \times n}$.
\end{theorem}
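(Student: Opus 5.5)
The plan is to use the Cauchy--Binet formula to get one inclusion, and then to use invertibility of $U$ and $V$ to get the reverse inclusion. The Smith ideals then follow at once, since $\mathcal{S}_k$ is defined purely in terms of $\mathcal{I}_k$ and $\mathcal{I}_{k-1}$.

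\textbf{First inclusion.} We show $\mathcal{I}_k(AB) \subseteq \mathcal{I}_k(A)$ and $\mathcal{I}_k(AB)\subseteq \mathcal{I}_k(B)$ for any matrices $A,B$ over $\FR$ of compatible sizes and $1\le k\le \min$ of the relevant dimensions.
\begin{itemize}
\item By Cauchy--Binet, any $k\times k$ minor of $AB$ with row set $\alpha$ and column set $\beta$ equals
\[\sum_{\gamma} \det A[\alpha,\gamma]\,\det B[\gamma,\beta],\]
where $\gamma$ runs over $k$-subsets of the inner index set.
\item Each summand is an $\FR$-multiple of a $k\times k$ minor of $A$, and also an $\FR$-multiple of a $k\times k$ minor of $B$.
\item If $k$ exceeds the inner dimension, the sum is empty, so the minor is $0$. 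This is consistent with the convention $\mathcal{I}_k=\langle 0\rangle$.
\end{itemize}
Applying this twice gives $\mathcal{I}_k(UMV)\subseteq \mathcal{I}_k(MV)\subseteq \mathcal{I}_k(M)$. The cases $k\le 0$ (ideal equals $\FR$) and $k>\min\{m,n\}$ (ideal equals $\langle 0\rangle$) are trivial, because $UMV$ has the same size as $M$.

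\textbf{Reverse inclusion.} Since $\det U$ is a unit, the adjugate formula gives $U^{-1}=(\det U)^{-1}\operatorname{adj}(U)\in\FR^{m\times m}$, and likewise $V^{-1}\in \FR^{n\times n}$. Writing $M=U^{-1}(UMV)V^{-1}$ and applying the first inclusion yields $\mathcal{I}_k(M)\subseteq\mathcal{I}_k(UMV)$. Hence the determinantal ideals coincide.

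\textbf{Smith ideals.} Because $\mathcal{I}_k$ and $\mathcal{I}_{k-1}$ are unchanged, both the ideal quotient and the zero-case convention in the definition of $\mathcal{S}_k$ are unchanged.

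There is no real obstacle. The only point needing care is stating Cauchy--Binet over a commutative ring, which holds because it is a polynomial identity.
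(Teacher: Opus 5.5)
Your proof is correct and follows the paper's argument exactly: Cauchy--Binet gives $\mathcal{I}_k(UMV)\subseteq\mathcal{I}_k(M)$, and the reverse inclusion comes from $M=U^{-1}(UMV)V^{-1}$, where $U^{-1},V^{-1}$ have entries in $\FR$. The details you add (the empty-sum case, the conventions for $k\le 0$ and $k>\min\{m,n\}$, and why the Smith ideals follow) are all sound.
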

\begin{proof}
By the Cauchy-Binet formula \cite[Section 0.8.7]{HoJo}, $\mathcal{I}_k(UMV) \subseteq \mathcal{I}_k(M)$. The inverse inclusion follows analogously by the identity $M=U^{-1}(UMV)V^{-1}$.
\end{proof}

Over an elementary divisor domain, the Smith ideals $\mathcal{S}_k(M)$ are integral ideals, finitely generated, and satisfy $\mathcal{S}_{k+1}(M) \subseteq \mathcal{S}_k(M)$ for all $k=1,\dots,\min\{m,n\}-1$. In fact, since in an elementary divisor domain every finitely generated ideal is principal, this translates to the divisibility chain property of invariant factors, $s_k(M) \mid s_{k+1}(M)$. The first property holds true more generally over integrally closed domains, as we show in Theorem \ref{thm:smithintegral}. The second and third properties hold true over a more restrictive class called Pr\"{u}fer domains; we show this in Theorem \ref{thm:smithchain}, and we illustrate in Example \ref{ex:noSmithdivisibility} that it is not generally true even for unique factorization domains.

\begin{theorem}\label{thm:smithintegral}
Let $\FR$ be an integrally closed domain and $M \in \FR^{m \times n}$. Then, for all $k=1,\dots,\min\{m,n\}$, the Smith ideals $\mathcal{S}_k(M)$ are integral ideals of $\FR$.
\end{theorem}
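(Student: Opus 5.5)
We need to show $x \in \mathcal{S}_k(M)$ implies $x\in\FR$. The trivial cases go first. If $\mathcal{I}_{k-1}(M)=\langle 0\rangle$, then $\mathcal{S}_k(M)=\langle 0\rangle$ by definition. For $k=1$ we have $\mathcal{S}_1(M)=\mathcal{I}_1(M):\FR=\mathcal{I}_1(M)$, since $x\FR\subseteq \mathcal{I}_1(M)$ forces $x\in\mathcal{I}_1(M)$. So assume $k\ge 2$ and $J:=\mathcal{I}_{k-1}(M)\neq\langle 0\rangle$.

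Take $x\in\K$ with $x\,\mathcal{I}_{k-1}(M)\subseteq \mathcal{I}_k(M)$. By Lemma \ref{lem:generalinclusions}, $\mathcal{I}_k(M)\subseteq \mathcal{I}_{k-1}(M)$, so $xJ\subseteq J$. Now $J$ is a nonzero finitely generated ideal, generated by the finitely many $(k-1)\times(k-1)$ minors $g_1,\dots,g_\ell$. Write $xg_i=\sum_j a_{ij}g_j$ with $a_{ij}\in\FR$. Then $(xI-A)g=0$ for the nonzero vector $g=(g_1,\dots,g_\ell)^T\in\K^\ell$, which gives $\det(xI-A)=0$. Hence $x$ is a root of a monic polynomial with coefficients in $\FR$. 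This is the standard determinant trick (Cayley--Hamilton), and it shows that $x$ is integral over $\FR$.

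Since $\FR$ is integrally closed and $x\in\K$, it follows that $x\in\FR$. Therefore $\mathcal{S}_k(M)\subseteq\FR$. Being a fractional ideal contained in $\FR$, it is an $\FR$-submodule of $\FR$, that is, an integral ideal.

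The only step needing real care is the one where integrality is extracted from $xJ\subseteq J$. Two facts make it work: $J$ is finitely generated, because $M$ has finitely many minors, and $J\neq 0$, which makes the vector $g$ nonzero over the field $\K$ so that $\det(xI-A)=0$ follows.
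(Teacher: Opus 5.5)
Your proof is correct and is essentially the paper's argument. The paper also handles $\mathcal{I}_{k-1}(M)=\langle 0\rangle$ separately. It then obtains $x\,\mathcal{I}_{k-1}(M)\subseteq \mathcal{I}_1(M)\cdot\mathcal{I}_{k-1}(M)$, a slightly stronger inclusion than your $x J\subseteq J$ that is not actually needed, and applies the determinant trick by citing Atiyah--Macdonald, Proposition 2.4, before concluding $x\in\FR$ from integral closedness.
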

\begin{proof}
{If $\mathcal{I}_{k-1}(M)= \langle0\rangle$, the statement is obvious. Otherwise,} take an arbitrary $x \in \mathcal{S}_k(M)$, then $x \mathcal{I}_{k-1}(M) \subseteq \mathcal{I}_k(M) \subseteq \mathcal{I}_1(M) \cdot \mathcal{I}_{k-1}(M)$, where the latter inclusion follows by Lemma \ref{lem:generalinclusions}. Then, by \cite[Proposition 2.4]{Atiyah1969}, $x$ is integral over $\FR$. By the integral closure assumption, we conclude $x \in \FR$.
\end{proof}

In the literature, Pr\"{u}fer domains are defined in many equivalent ways. Below, we select some that are convenient for our purposes; see \cite[Theorem 1.1.1]{FHP1997} and \cite[Theorem 18.1 and Theorem 21.2]{gilmer1968} for more details.

\begin{definition}\cite{FHP1997, gilmer1968, HalterKoch}\label{def:prufer}
An integral domain $\FR$ is called a \emph{Pr\"{u}fer domain} if any of the following equivalent conditions hold:
\begin{enumerate}
\item All its nonzero finitely generated ideals are invertible;
\item All its nonzero ideals generated by two elements are invertible;
\item For every maximal ideal $\mathfrak{m} \subset \FR$, the localization of $\FR$ at $\mathfrak{m}$ is a valuation domain.
\end{enumerate}
\end{definition}

Examples of Pr\"{u}fer domains include every B\'{e}zout domain, every Dedekind domain, and the ring of integer-valued rational polynomials \cite{cahen1997}, i.e., elements of $\mathbb{Q}[x]$ that assume integer values for all $x \in \mathbb{Z}$. While maximal ideals are integral by definition, note that it does not really matter if the ideals in items 1-2 of Definition \ref{def:prufer} are taken to be integral or fractional, since nonzero principal ideals are always trivially invertible and $\langle r \rangle^{-1} = r^{-1} \FR$. As a preliminary step towards Theorem \ref{thm:smithchain}, we {recall} in Lemma \ref{lem:localglobal} \cite[Section 10]{Kaplansky1949} a property of ideals of a Pr\"{u}fer domain that will be also important to prove Theorem \ref{thm:rosenbrock}.

\begin{lemma}\label{lem:localglobal}\cite{Kaplansky1949}
   Let $\FR$ be a Pr\"{u}fer domain and let $\mathfrak{m} \subset \FR$ be a maximal ideal. Then, the localization $\FR_\mathfrak{m} = \left\{ \frac{a}{b} \mid a,b \in \FR, b \not\in \mathfrak{m} \right\}$ is an elementary divisor domain.
\end{lemma}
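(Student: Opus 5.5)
Since $\FR_\mathfrak{m}$ is, by Definition \ref{def:prufer}(3), a valuation domain, the plan is to show directly that every valuation domain $V$ is an elementary divisor domain. That is, we show that every matrix $M \in V^{m\times n}$ is equivalent, via invertible matrices over $V$, to a diagonal matrix whose diagonal entries form a divisibility chain. The key fact we use is that in a valuation domain the principal ideals are totally ordered by inclusion: for any $a,b \in V$, either $a \mid b$ or $b \mid a$. Hence among finitely many elements there is always one that divides all the others.

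The argument is the standard Smith-form induction, without any Euclidean algorithm. If $M=0$ there is nothing to do. Otherwise, we choose an entry $m_{ij}$ whose principal ideal contains those of all other entries; in other words, $m_{ij}$ divides every entry of $M$. Row and column permutations, which are unimodular, move this entry to position $(1,1)$. Since $m_{11}$ divides every $m_{1j}$ and every $m_{i1}$, elementary operations (subtracting $(m_{i1}/m_{11})$ times row $1$ from row $i$, and similarly for columns) clear the first row and column. These operations have determinant $1$, so they are unimodular.

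The resulting matrix has the form $m_{11}\oplus M'$. The entries of $M'$ are $\FR_\mathfrak{m}$-linear combinations of entries of $M$, so they are all still divisible by $m_{11}$. By induction, $M'$ is unimodularly equivalent to a Smith form $\mathrm{diag}(s_2,\dots)$ with $s_2\mid s_3\mid\cdots$. Moreover, $m_{11}$ divides $s_2$: indeed $s_2$ lies in the ideal generated by the entries of $M'$, since that ideal is invariant under the equivalence (this is the case $k=1$ of Theorem \ref{thm:unimodularinvariance}). Assembling the block unimodular matrices gives a Smith form for $M$.

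Finally, the definition of elementary divisor domain recalled in the introduction requires exactly this property, together with $\FR_\mathfrak{m}$ being an integral domain, which holds as it is a subring of $\K$. No step is a serious obstacle. The only point needing care is the justification that $\FR_\mathfrak{m}$ is a valuation domain, which is item 3 of Definition \ref{def:prufer}, and the observation that total ordering of divisibility gives a minimal-valuation pivot.
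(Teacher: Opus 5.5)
Your proof is correct and follows the same route as the paper: item 3 of Definition \ref{def:prufer} makes $\FR_\mathfrak{m}$ a valuation domain, and valuation domains are elementary divisor domains. The only difference is that the paper cites Kaplansky for this last fact, whereas you prove it directly with the standard Smith-form induction: a pivot dividing all entries exists because principal ideals are totally ordered, and the divisibility $m_{11} \mid s_2$ follows from invariance of $\mathcal{I}_1$. That argument is sound.
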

 \begin{proof}

  By item 3. in Definition \ref{def:prufer}, $\FR_\mathfrak{m}$ is a valuation domain, {and it is known that a valuation domain is an elementary divisor domain \cite[Section 10]{Kaplansky1949}}.
\end{proof}

\begin{theorem}\label{thm:smithchain}
Let $\FR$ be a Pr\"{u}fer domain and $M \in \FR^{m \times n}$. Then, the Smith ideals of $M$ are finitely generated. Moreover, for all $k=2,\dots,\min\{m,n\}$, it holds $\mathcal{S}_{k}(M) \subseteq \mathcal{S}_{k-1}(M)$.
\end{theorem}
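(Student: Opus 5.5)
The argument splits according to whether the relevant determinantal ideals vanish, and otherwise uses invertibility of finitely generated ideals together with the local-to-global principle of Lemma \ref{lem:localglobal}.

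\emph{Degenerate cases.} If $\mathcal{I}_{k-1}(M)=\langle 0\rangle$, then $\mathcal{S}_k(M)=\langle 0\rangle$ by definition, which is finitely generated and contained in anything. If $\mathcal{I}_{k-1}(M)\neq\langle 0\rangle$ but $\mathcal{I}_k(M)=\langle 0\rangle$, then $\mathcal{S}_k(M)=\langle 0\rangle$ as well, since $\FR$ is a domain.

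\emph{Finite generation.} Assume $\mathcal{I}_{k-1}(M)\neq\langle 0\rangle$. The ideal $\mathcal{I}_{k-1}(M)$ is finitely generated by the finitely many minors, so it is invertible by Definition \ref{def:prufer}. Proposition \ref{prop:invertible} then gives
\[
\mathcal{S}_k(M)=\mathcal{I}_k(M)\cdot\mathcal{I}_{k-1}(M)^{-1}.
\]
The inverse of an invertible ideal is finitely generated: from $1=\sum a_i b_i$ with $a_i\in J$ and $b_i\in J^{-1}$, every $x\in J^{-1}$ satisfies $x=\sum (xa_i)b_i$ with $xa_i\in\FR$. A product of two finitely generated fractional ideals is finitely generated, so $\mathcal{S}_k(M)$ is finitely generated. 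Also, $\FR$ is integrally closed (a Pr\"ufer domain is an intersection of valuation domains), so Theorem \ref{thm:smithintegral} shows these Smith ideals are integral; this is not strictly needed.

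\emph{Chain property, reduction to the local case.} For $k\ge 2$ we may assume $\mathcal{I}_{k-1}(M)\neq\langle 0\rangle$ by the degenerate cases; then $\mathcal{I}_{k-2}(M)\neq\langle 0\rangle$ by Lemma \ref{lem:generalinclusions}. Both Smith ideals are then products of finitely generated ideals and inverses of finitely generated invertible ideals. For such ideals the inclusion $A\subseteq B$ holds if and only if $A\FR_\mathfrak{m}\subseteq B\FR_\mathfrak{m}$ for every maximal ideal $\mathfrak{m}$, because $B=\bigcap_\mathfrak{m} B\FR_\mathfrak{m}$ for any $\FR$-submodule $B$ of $\K$. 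Localization commutes with sums and products. It also commutes with ideal quotients $I:J$ when $J$ is finitely generated, and with determinantal ideals, since $\mathcal{I}_k(M)\FR_\mathfrak{m}=\mathcal{I}_k(M)$ computed over $\FR_\mathfrak{m}$. Hence
\[
\mathcal{S}_k(M)\FR_\mathfrak{m}=\mathcal{S}_k(M\text{ over }\FR_\mathfrak{m}).
\]

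\emph{The local case.} By Lemma \ref{lem:localglobal}, $\FR_\mathfrak{m}$ is an elementary divisor domain. There $\mathcal{I}_j=\langle\gamma_j\rangle$ and $\mathcal{S}_j=\langle s_j\rangle$ with $s_{k-1}\mid s_k$, where $s_k=\gamma_k/\gamma_{k-1}$ and $\gamma_{k-1}\ne 0$. This gives $\mathcal{S}_k\subseteq\mathcal{S}_{k-1}$ locally, hence globally.

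\emph{Main obstacle.} The delicate step is justifying that localization commutes with the ideal quotient. The inclusion $(I:J)\FR_\mathfrak{m}\subseteq I\FR_\mathfrak{m}:J\FR_\mathfrak{m}$ is easy. For the reverse inclusion, take $J=\langle g_1,\dots,g_\ell\rangle$ and $x$ with $xg_i\in I\FR_\mathfrak{m}$. Clear the finitely many denominators by a common $t\notin\mathfrak{m}$; then $tx\in I:J$.

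This step can be bypassed. Over a Pr\"ufer domain, write the quotient as the product $I\cdot J^{-1}$, and note that inverses of finitely generated ideals localize correctly by the same denominator-clearing argument.
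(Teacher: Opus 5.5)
Your proof is correct and follows essentially the same route as the paper. Finite generation comes from invertibility of the nonzero finitely generated ideal $\mathcal{I}_{k-1}(M)$, which gives $\mathcal{S}_k(M)=\mathcal{I}_k(M)\cdot\mathcal{I}_{k-1}(M)^{-1}$ with a finitely generated inverse. The chain property comes from localizing at each maximal ideal $\mathfrak{m}$, using that $\FR_\mathfrak{m}$ is an elementary divisor domain, and recovering the global inclusion from $B=\bigcap_\mathfrak{m} B\FR_\mathfrak{m}$.

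The only difference is that you prove directly two facts the paper merely cites: localization commutes with $I:J$ when $J$ is finitely generated, and the local--global intersection holds for arbitrary $\FR$-submodules of $\K$. Proving the latter for submodules of $\K$ conveniently sidesteps whether the Smith ideals are integral.
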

\begin{proof}
If  $\mathcal{S}_k(M)=\langle 0 \rangle$ then the first part of the statement is obvious. Otherwise, $\mathcal{I}_{k-1}(M)$ is nonzero, and hence invertible because by definition $\mathcal{I}_h(M)$ are finitely generated for all $h$. Thus, there exist $a_i \in \mathcal{I}_{k-1}(M), x_i \in \mathcal{I}_{k-1}(M)^{-1}$ such that $1 = \sum_{i=1}^\ell a_i x_i$. It follows that, for every $y \in \mathcal{I}_{k-1}(M)^{-1}$ (so that $a_i y \in \FR$ for all $i$), $y=\sum_{i=1}^\ell (a_i y) x_i$, and hence $\mathcal{I}_{k-1}(M)^{-1} = \langle x_1, \dots, x_\ell \rangle$ is finitely generated. On the other hand, Proposition \ref{prop:invertible} implies $\mathcal{S}_k(M) = \mathcal{I}_k(M) \cdot \mathcal{I}_{k-1}(M)^{-1}$, and hence $\mathcal{S}_k(M)$ is generated by the products of the form $x_i m_j$, where $m_j$ is a $k \times k$ minor of $M$. There are finitely many, namely at most $\ell \binom{n}{k} \binom{m}{k}$, such products, and hence the Smith ideals are finitely generated.

To prove the second part of the statement, fix an arbitrary maximal ideal $\mathfrak{m} \subset \FR$, and denote by $I_\mathfrak{m} = \{ \frac{a}{b} \mid a \in I, b \in \FR, b \not\in \mathfrak{m} \}$ the localization at $\mathfrak{m}$ of the ideal $I$. Then, the localization $\mathcal{S}_k(M)_\mathfrak{m}$ is equal  to the $k$-th Smith ideal of $M$ seen as a matrix over $\FR_\mathfrak{m} \supseteq \FR$; see \cite[Proposition 3.11]{Atiyah1969} and \cite[item 4 in Theorem 3.4]{gilmer1968}. By Lemma \ref{lem:localglobal}, this implies in turn $\mathcal{S}_k(M)_\mathfrak{m} = \langle s_k \rangle$, where $s_k \in \FR_\mathfrak{m}$ is the $k$-th invariant factor in a Smith form of $M$ over $\FR_\mathfrak{m}$. We conclude that
\[ s_{k-1} \mid s_{k} \Rightarrow \mathcal{S}_k(M)_\mathfrak{m} \subseteq \mathcal{S}_{k-1}(M)_\mathfrak{m}.  \]
Since $\mathfrak{m}$ was arbitrary, we invoke the identity $I = \cap_\mathfrak{m} I_\mathfrak{m}$, which is valid for every ideal $I \subseteq \FR$ \cite[Proposition 3.9]{Atiyah1969}. This concludes the proof.
\end{proof}

\begin{example}\label{ex:noSmithdivisibility}
Let $\mathcal{R}=\mathbb{C}[x,y]$ and let $M=\begin{bmatrix}
x^3 y & 0 & x^4 &x y^3\\
0 & x y^3 & x^3 y & y^4
\end{bmatrix} \in \mathcal{R}^{2 \times 4}$. Then, $\mathcal{I}_1(M)=\mathcal{S}_1(M)=\langle x^4, x^3 y, x y^3, y^4 \rangle$, and $\mathcal{I}_2(M)=\langle x^4 y^4, x^6 y^2, x^3 y^5, x^5 y^3, x^2 y^6 \rangle$. Therefore, $x^2 y^2 \in \mathcal{S}_2(M)$, because $x^2 y^2 \mathcal{I}_1(M) = \langle x^6 y^2, x^5 y^3, x^3 y^5, x^2 y^6 \rangle \subsetneq \mathcal{I}_2(M)$; however, it is clear that $x^2 y^2 \not\in \mathcal{S}_1(M)$. It can in fact be shown, but we omit the derivation, that in this particular example the second Smith ideal is principal, $\mathcal{S}_2(M)=\langle x^2 y^2 \rangle$.
\end{example}

We now turn to generalizing the Smith-McMillan form. Given a matrix $M \in \K^{m \times n}$, let $0 \neq d \in \FR$ be such that $dM \in \mathcal{R}^{m \times n}$. Note that the existence of $d$ does \emph{not} require that $\FR$ is a GCD domain, e.g., we can just take $d$ to be the product of all the $mn$ denominators in $M$. We then define the determinantal fractional ideals of $M$ as $\mathcal{I}_k(M):=d^{-k} \mathcal{I}_k(dM)$, and the Smith ideals of $M$ as $\mathcal{S}_k(M):= d^{-1} \mathcal{S}_k(dM)$. It is clear that this definition is well posed, i.e., it does not depend on the choice of $d$. We record this construction in Definition \ref{def:smithidealsforrationalmatrices}.

\begin{definition}\label{def:smithidealsforrationalmatrices}
Let $\FR$ be an integral domain with field of fractions $\K$. Let $M \in \K^{m \times n}$ and ${0 \neq  } d \in \FR$ such that $dM \in \mathcal{R}^{m \times n}$. Then, for $k=1,\dots,\min\{m,n\}$, the $k$-th Smith ideal of $M$ is \[ \mathcal{S}_k(M):=d^{-1} \mathcal{S}_k(dM).\]
\end{definition}

The Smith ideals of a matrix over $\K$ are generally not integral ideals of $\FR$, not even if $\FR$ is a Euclidean domain (e.g. $\FR=\mathbb{Z}$ and $M=\frac12$). Over a Pr\"{u}fer domain, when $M$ is square and invertible over $\K$, Theorem \ref{thm:smithofinverse} shows that there is a nice relation between the Smith ideals of $M$ and those of $M^{-1}$.

\begin{theorem}\label{thm:smithofinverse}
Let $\FR$ be an integral domain and let $M \in \K^{n \times n}$ be invertible. Then, for all $k=0,\dots,n$,
\[  \mathcal{I}_k(M) =  \det(M)  \mathcal{I}_{n-k}(M^{-1}). \]
Moreover, if $\FR$ is a Pr\"{u}fer domain, then, for all $k=1,\dots,n$,
\[ \mathcal{S}_k(M)=\mathcal{S}_{n+1-k}(M^{-1})^{-1}. \]
\end{theorem}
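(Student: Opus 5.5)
The plan is to first prove the determinantal identity and then pass to quotients. By homogeneity it is enough to treat $M$ with entries in $\FR$. Indeed, if $dM \in \FR^{n\times n}$, then $\mathcal{I}_k(M)=d^{-k}\mathcal{I}_k(dM)$ and $\det(M)=d^{-n}\det(dM)$. Also $(dM)^{-1}=d^{-1}M^{-1}$, so $\mathcal{I}_{n-k}(M^{-1})=d^{-(n-k)}\mathcal{I}_{n-k}((dM)^{-1})$. The powers of $d$ on both sides then match: $d^{-k}$ against $d^{-n}\cdot d^{n-k}$.

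For integral $M$ we use Jacobi's complementary minor identity. Every $(n-k)\times(n-k)$ minor of $M^{-1}$, with rows $\alpha$ and columns $\beta$, equals $\pm\det(M)^{-1}$ times the $k\times k$ minor of $M$ on the complementary rows $\beta^c$ and columns $\alpha^c$. This gives a bijection between the $(n-k)$-minors of $M^{-1}$ and the $k$-minors of $M$, up to sign and the common factor $\det(M)^{-1}$.

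There is one subtlety: the fractional determinantal ideal of $M^{-1}$ was defined through a common denominator $d'$ as $d'^{-(n-k)}\mathcal{I}_{n-k}(d'M^{-1})$. I will note that this equals the $\FR$-module generated by the $(n-k)$-minors of $M^{-1}$ in $\K$, since scaling by $d'$ multiplies each such minor by $d'^{n-k}$. The Jacobi bijection then gives $\mathcal{I}_{n-k}(M^{-1})=\det(M)^{-1}\mathcal{I}_k(M)$ directly. The edge cases $k=0$ and $k=n$ are checked using the conventions $\mathcal{I}_0=\FR$ and $\mathcal{I}_n(M)=\langle\det M\rangle$.

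For the Smith ideal statement, assume $\FR$ is Prüfer. Since $M$ is invertible, all $\mathcal{I}_k(M)$ for $0\le k\le n$ are nonzero and finitely generated, hence invertible by Definition \ref{def:prufer}. I first check that $\mathcal{S}_k(M)=\mathcal{I}_k(M):\mathcal{I}_{k-1}(M)$ also holds for matrices over $\K$. Take $dM$ integral; by the homogeneity of ideal quotients, $d^{-1}\bigl(\mathcal{I}_k(dM):\mathcal{I}_{k-1}(dM)\bigr)=d^{-k}\mathcal{I}_k(dM):d^{-(k-1)}\mathcal{I}_{k-1}(dM)$. By Proposition \ref{prop:invertible}, this gives $\mathcal{S}_k(M)=\mathcal{I}_k(M)\cdot\mathcal{I}_{k-1}(M)^{-1}$. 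Applying the first part,
\[ \mathcal{S}_{n+1-k}(M^{-1})=\mathcal{I}_{n+1-k}(M^{-1})\cdot\mathcal{I}_{n-k}(M^{-1})^{-1} =\det(M)^{-1}\mathcal{I}_{k-1}(M)\cdot\det(M)\,\mathcal{I}_k(M)^{-1}=\mathcal{I}_{k-1}(M)\cdot\mathcal{I}_k(M)^{-1}. \]
Here I used that $(xI)^{-1}=x^{-1}I^{-1}$ for $0\ne x\in\K$.

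It remains to invert. Products of invertible fractional ideals are invertible, with $(IJ)^{-1}=I^{-1}J^{-1}$, and $(I^{-1})^{-1}=I$ for invertible $I$. Both facts follow from multiplying $II^{-1}=\FR$ by the appropriate factors. Therefore $\mathcal{S}_{n+1-k}(M^{-1})^{-1}=\mathcal{I}_{k-1}(M)^{-1}\cdot\mathcal{I}_k(M)=\mathcal{S}_k(M)$.

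The main obstacle is bookkeeping rather than depth. The key points are that fractional determinantal ideals and Smith ideals of matrices over $\K$ behave as the naive $\FR$-modules generated by minors, and that the quotient-equals-product identity of Proposition \ref{prop:invertible} transfers to the fractional setting. Both reduce to the scaling identities above together with the invertibility supplied by the Prüfer hypothesis.
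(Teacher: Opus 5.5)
Your proposal is correct and follows essentially the paper's route: Jacobi's complementary-minor identity gives $\mathcal{I}_k(M)=\det(M)\,\mathcal{I}_{n-k}(M^{-1})$, and in the Pr\"{u}fer case invertibility of the nonzero finitely generated determinantal ideals, together with Proposition~\ref{prop:invertible}, lets you write the Smith ideals as $\mathcal{I}_k\cdot\mathcal{I}_{k-1}^{-1}$, substitute, and invert; you also usefully make explicit two things the paper uses tacitly, namely that this formula extends to matrices over $\K$ and the rules $(IJ)^{-1}=I^{-1}J^{-1}$, $(I^{-1})^{-1}=I$. One small slip: since $M^{-1}=d\,(dM)^{-1}$, the correct scaling is $\mathcal{I}_{n-k}(M^{-1})=d^{\,n-k}\mathcal{I}_{n-k}((dM)^{-1})$ rather than $d^{-(n-k)}$ (your final count $d^{-n}\cdot d^{n-k}$ already uses the right power), and that reduction is redundant anyway once you observe that fractional determinantal ideals are just the $\FR$-modules generated by the minors.
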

\begin{proof}
The first part of the statement follows by the so-called Jacobi's identity \cite[Equation (0.8.4.1)]{HoJo}. For the second part of the statement, note that the invertibility of $M$ implies that the Smith ideals of $M$ and $M^{-1}$ must all be nonzero. Hence, using the first part of the statement,
\[  \mathcal{S}_k(M) = \mathcal{I}_{k}(M) \cdot \mathcal{I}_{k-1}(M)^{-1} = \mathcal{I}_{n-k}(M^{-1}) \cdot \mathcal{I}_{n+1-k}(M^{-1})^{-1} = \mathcal{S}_{n+1-k}(M^{-1})^{-1}.     \]
\end{proof}

Let us now define a sequence of ``numerator ideals" and ``denominator ideals". We start with denominators. 

\begin{definition}[Denominator and numerator ideals]\label{def:311}
Let $M \in \K^{m \times n}$ have Smith fractional ideals $\mathcal{S}_k(M)$. The $k$-th denominator ideal of $M$ is defined as follows: If $\mathcal{S}_k(M)=\langle 0 \rangle$, we define $\mathcal{D}_k(M):=\FR$; otherwise, we define $\mathcal{D}_k(M):= \mathcal{S}_k(M)^{-1} \cap \FR$. The $k$-th numerator ideal of $M$ is defined as $\mathcal{N}_k(M) := \mathcal{S}_k(M) \cdot \mathcal{D}_k(M)$.
\end{definition}

Note that the numerator and denominator ideals are always integral ideals of $\FR$. This is manifest by construction for $\mathcal{D}_k(M)$, whereas for the numerator ideal {it is immediate when
$\mathcal{S}_k(M)=\langle0\rangle$, and otherwise} it holds because
\[  \mathcal{S}_k(M) \cdot \mathcal{D}_k(M) \subseteq \mathcal{S}_k(M) \cdot \mathcal{S}_k(M)^{-1} \subseteq \FR.\]

\begin{proposition}\label{prop:equalifinv}
Let $\FR$ be an integral domain with field of fractions $\K$. For every $M \in \K^{m \times n}$ and all $k=1,\dots,\min\{m,n\}$, it holds $\mathcal{S}_k(M) \subseteq \mathcal{N}_k(M) : \mathcal{D}_k(M)$, with equality if $\mathcal{D}_k(M)$ is invertible.
\end{proposition}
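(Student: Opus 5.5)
The plan is to argue directly from the definitions, splitting on whether $\mathcal{S}_k(M)$ is zero.

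\emph{Case $\mathcal{S}_k(M)=\langle 0\rangle$.} Here $\mathcal{D}_k(M)=\FR$, so $\mathcal{N}_k(M)=\langle 0\rangle$. Then $\mathcal{N}_k(M):\mathcal{D}_k(M)=\langle 0\rangle:\FR=\langle 0\rangle$, since $x\FR\subseteq\langle 0\rangle$ forces $x=0$. Both the inclusion and the equality are therefore trivial, and $\FR$ is of course invertible.

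\emph{Case $\mathcal{S}_k(M)\neq\langle 0\rangle$.} I first check that $\mathcal{D}_k(M)\neq\langle 0\rangle$, so that the ideal quotient is defined. $\mathcal{S}_k(M)$ is a fractional ideal, so there is $0\neq d\in\FR$ with $d\,\mathcal{S}_k(M)\subseteq\FR$. Hence $d\in\mathcal{S}_k(M)^{-1}\cap\FR=\mathcal{D}_k(M)$. The inclusion is then immediate: for $x\in\mathcal{S}_k(M)$ we have $x\,\mathcal{D}_k(M)\subseteq\mathcal{S}_k(M)\cdot\mathcal{D}_k(M)=\mathcal{N}_k(M)$, so $x\in\mathcal{N}_k(M):\mathcal{D}_k(M)$.

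For equality when $\mathcal{D}_k(M)$ is invertible, I use Proposition \ref{prop:invertible} with $I=\mathcal{N}_k(M)$ and $J=\mathcal{D}_k(M)$. It gives
\[\mathcal{N}_k(M):\mathcal{D}_k(M)=\mathcal{N}_k(M)\cdot\mathcal{D}_k(M)^{-1}=\mathcal{S}_k(M)\cdot\mathcal{D}_k(M)\cdot\mathcal{D}_k(M)^{-1}=\mathcal{S}_k(M)\cdot\FR=\mathcal{S}_k(M).\]
This uses associativity and commutativity of the ideal product, and $I\cdot\FR=I$ for an $\FR$-module $I\subseteq\K$. Alternatively, one can argue elementwise: if $y\,\mathcal{D}_k(M)\subseteq\mathcal{S}_k(M)\mathcal{D}_k(M)$, multiply both sides by $\mathcal{D}_k(M)^{-1}$ to obtain $y\FR\subseteq\mathcal{S}_k(M)$.

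The only step needing care is applying Proposition \ref{prop:invertible}, which requires $\mathcal{N}_k(M)\neq\langle 0\rangle$. This holds because $\mathcal{N}_k(M)$ contains $s d\neq 0$ for any nonzero $s\in\mathcal{S}_k(M)$. Should that proposition's hypothesis be judged inessential, the elementwise argument avoids it anyway. I expect no real obstacle; the main point is not to overlook the zero case and the nonvanishing of $\mathcal{D}_k(M)$.
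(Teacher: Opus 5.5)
Your proof is correct and follows the paper's argument essentially verbatim. You use the same split on whether $\mathcal{S}_k(M)=\langle 0\rangle$, the same elementwise inclusion, and the same appeal to Proposition \ref{prop:invertible} to get $\mathcal{N}_k(M):\mathcal{D}_k(M)=\mathcal{S}_k(M)\cdot\mathcal{D}_k(M)\cdot\mathcal{D}_k(M)^{-1}=\mathcal{S}_k(M)$; your checks that $\mathcal{D}_k(M)$ and $\mathcal{N}_k(M)$ are nonzero (so the quotient is defined and the proposition's hypotheses hold) are details the paper leaves implicit.
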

\begin{proof}
If $\mathcal{S}_k(M) = \langle 0 \rangle$, the statement reduces to the identity $0=0$. Hence, let us assume that $\mathcal{S}_k(M)$ is nonzero. Let $s \in \mathcal{S}_k(M)$ and $d \in \mathcal{D}_k(M)$, then $sd \in \mathcal{N}_k(M)$ and hence $s \in \mathcal{N}_k(M) : \mathcal{D}_k(M)$. Assume now that $\mathcal{D}_k(M)$ is invertible, then by Proposition \ref{prop:invertible}
\[ \mathcal{S}_k(M) = \mathcal{S}_k(M) \cdot \mathcal{D}_k(M) \cdot \mathcal{D}_k(M)^{-1} =  \mathcal{N}_k(M) \cdot \mathcal{D}_k(M)^{-1}  = \mathcal{N}_k(M) : \mathcal{D}_k(M).\]  
\end{proof}
\begin{remark}
If $\FR$ is a Pr\"{u}fer domain then $\mathcal{S}_k(M)$ is finitely generated by Theorem \ref{thm:smithchain}. Thus, if $\mathcal{S}_k(M) \neq \langle 0 \rangle$, then $\mathcal{S}_k(M)^{-1}$ is finitely generated by the same argument as in the proof of Theorem \ref{thm:smithchain}. Hence, $\mathcal{D}_k(M) = \mathcal{S}_k(M)^{-1} \cap \FR = ( \mathcal{S}_k(M) + \FR)^{-1}$ is finitely generated and thus invertible by definition of Pr\"{u}fer domain. We conclude that over a Pr\"{u}fer domain it always holds $\mathcal{S}_k(M) = \mathcal{N}_k(M) \cdot \mathcal{D}_k(M)^{-1}$, and thus $\mathcal{N}_k(M) = \mathcal{S}_k(M) \cap \FR$.
\end{remark}

We show in Theorem \ref{thm:generalization} that the two chains of ideals $\mathcal{N}_k(M)$ and $\mathcal{D}_k(M)$ constitute indeed a generalization of the numerators and denominators of the invariant factors in a Smith-McMillan form.

\begin{theorem}\label{thm:generalization}
Let $\FR$ be an elementary divisor domain with field of fractions $\K$, and let $M \in \K^{m \times n}$ have rank $r$ and Smith-McMillan form $\bigoplus_{i=1}^r \frac{\epsilon_i}{\psi_i} \oplus 0$. Then:
\begin{enumerate}
\item For $k=1,\dots,r$, $\mathcal{N}_k(M)=\langle \epsilon_k \rangle$ and $\mathcal{D}_k(M)=\langle \psi_k \rangle$;
\item For $k > r$, $\mathcal{N}_k(M)=\langle 0 \rangle$ and $\mathcal{D}_k(M)=\FR$.
\end{enumerate}
\end{theorem}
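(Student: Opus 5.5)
Since every statement in Theorem \ref{thm:generalization} concerns ideals, and these are invariant under unimodular equivalence, I will reduce to the Smith-McMillan form itself. Let $d \in \FR$ be a common denominator of $M$, so $dM \in \FR^{m\times n}$. There are unimodular $U,V$ over $\FR$ with $UMV = \Sigma := \mathrm{diag}(\epsilon_1/\psi_1,\dots,\epsilon_r/\psi_r)\oplus 0$. Then $U(dM)V = d\Sigma$, and Theorem \ref{thm:unimodularinvariance} gives $\mathcal{I}_k(dM)=\mathcal{I}_k(d\Sigma)$. Hence $\mathcal{S}_k(M)=\mathcal{S}_k(\Sigma)$ by Definition \ref{def:smithidealsforrationalmatrices}, and so the denominator and numerator ideals of $M$ and $\Sigma$ coincide. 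It therefore suffices to compute $\mathcal{S}_k(\Sigma)$.

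Next I note that $d\Sigma$ is a diagonal matrix over $\FR$ whose diagonal entries $d\epsilon_i/\psi_i$ still form a divisibility chain. This uses the Smith-McMillan properties $\epsilon_i\mid\epsilon_{i+1}$ and $\psi_{i+1}\mid\psi_i$. So $d\Sigma$ is already in Smith form over $\FR$. Its determinantal divisors are the products of the first $k$ diagonal entries, and over a B\'ezout domain $\mathcal{I}_k(d\Sigma)=\langle \prod_{i\le k} d\epsilon_i/\psi_i\rangle$ for $k\le r$, while $\mathcal{I}_k(d\Sigma)=\langle 0\rangle$ for $k>r$.

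I then compute the Smith ideals. Quotients of principal ideals by nonzero principal ideals are principal: $\langle a\rangle:\langle b\rangle=\langle a/b\rangle$. Hence $\mathcal{S}_k(d\Sigma)=\langle d\epsilon_k/\psi_k\rangle$ and $\mathcal{S}_k(M)=\langle \epsilon_k/\psi_k\rangle$ for $k\le r$. For $k>r$ we have $\mathcal{I}_k=0$, so the quotient is $0$ (or $\mathcal{S}_k=0$ directly if $\mathcal{I}_{k-1}=0$). Item 2 then follows immediately from Definition \ref{def:311}: $\mathcal{D}_k=\FR$ and $\mathcal{N}_k=0\cdot\FR=0$.

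For item 1 it remains to identify $\langle \epsilon/\psi\rangle^{-1}\cap\FR$ with $\langle\psi\rangle$, where $\epsilon,\psi$ are coprime and nonzero. This is the only step needing care. First, $\langle\epsilon/\psi\rangle^{-1}=\langle\psi/\epsilon\rangle$. An element $x=\psi a/\epsilon$ lies in $\FR$ exactly when $\epsilon\mid\psi a$. Since $\gcd(\epsilon,\psi)=1$, Bézout gives $1=u\epsilon+v\psi$. Then $a=u\epsilon a+v\psi a$, so $\epsilon\mid a$ and $x\in\langle\psi\rangle$. The reverse inclusion $\langle\psi\rangle\subseteq\langle\psi/\epsilon\rangle\cap\FR$ is clear. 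Hence $\mathcal{D}_k(M)=\langle\psi_k\rangle$, and $\mathcal{N}_k(M)=\langle\epsilon_k/\psi_k\rangle\langle\psi_k\rangle=\langle\epsilon_k\rangle$. I expect no real obstacle beyond checking that the divisibility chain for $d\Sigma$ holds, so that the determinantal ideals are the stated principal ideals.
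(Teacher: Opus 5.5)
Your proof is correct and follows essentially the same route as the paper: reduce to the Smith--McMillan form via unimodular invariance, compute $\mathcal{S}_k(M)=\langle \epsilon_k/\psi_k\rangle$ for $k\le r$ and $\langle 0\rangle$ for $k>r$, then read off $\mathcal{D}_k$ and $\mathcal{N}_k$ from Definition~\ref{def:311}. The differences are cosmetic (the paper uses the specific common denominator $d=\psi_1$ rather than an arbitrary $d$). You also usefully make explicit the B\'ezout/coprimality argument showing $\langle \psi_k/\epsilon_k\rangle\cap\FR=\langle\psi_k\rangle$, a step the paper leaves implicit.
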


\begin{proof}
By Theorem \ref{thm:unimodularinvariance}, we may with no loss of generality take $M$ to be equal to its Smith-McMillan form. 
\begin{enumerate}
\item Let first $k \leq r$. Since $\psi_k \mid \psi_1$ for all $k$, this implies that $\psi_1 M \in \FR^{m \times n}$ and therefore $\displaystyle \mathcal{I}_k(M) = \psi_1^{-k} \left\langle \prod_{i=1}^k \frac{\psi_1 \epsilon_i}{\psi_i} \right\rangle$ and $\displaystyle \mathcal{S}_k(M) = \psi_1^{-1} \left\langle \frac{\epsilon_k \psi_1}{ \psi_k} \right\rangle$. It follows that $\displaystyle \mathcal{S}_k(M)^{-1} = \epsilon_k^{-1} \langle \psi_k \rangle$ and hence $\mathcal{D}_k(M)= \langle \psi_k \rangle$. In turn, this yields $\mathcal{N}_k(M) = \langle \epsilon_k \rangle$.
\item In this  case $\mathcal{S}_k(M)=\langle 0 \rangle$ and hence $\mathcal{D}_k(M)=\FR$. Moreover, $\mathcal{N}_k(M) = \langle 0 \rangle \cdot \FR = \langle 0 \rangle$.
\end{enumerate}
\end{proof}

\section{The ideal-theoretic Rosenbrock's Theorem}\label{sec:rosenbrock}

In this section we often use block matrices, including special cases where some blocks may be empty. For example, if in the block matrix $\displaystyle X=\begin{bmatrix}
    X_{11} & 0\\
    0 & X_{22}
\end{bmatrix}$ we have $\displaystyle X_{11} = \begin{bmatrix}
    x & y\\
    z & w
\end{bmatrix} \in \FR^{2 \times 2}$ and $X_{22} \in \FR^{1 \times 0}$ (and the sizes of the zero blocks are determined accordingly), the notation is to be interpreted as to mean that one extra zero row must be appended, i.e., $\displaystyle X=\begin{bmatrix}
    x & y\\
    z & w\\
    0 & 0
\end{bmatrix}$. We will also identify, with slight abuse of notation, $\FR^{1 \times 1}$ with $\FR$.

\subsection{Rosenbrock's Theorem for matrices over Pr\"{u}fer domains}

Let $\FR$ be an integral domain and let $\K$ be its field of fractions. Consider a block system matrix
\begin{equation}\label{eq:systemmatrix}
P=\begin{bmatrix}
-A & B\\
C & D
\end{bmatrix} \in \FR^{(m+p) \times (n+p)},
\end{equation}
where $A \in \FR^{p \times p}$, $\det(A) \neq 0$. To state and prove an ideal-theoretic version of Rosenbrock's Theorem, we are going to relate the Smith ideals of $P$ to those of the Schur complement of $-A$ in $P$, namely, $G:=D+CA^{-1}B \in \K^{m \times n}$.

\begin{theorem}[Ideal-theoretic Rosenbrock's Theorem]\label{thm:rosenbrock}
Let $\FR$ be an integral domain with field of fractions $\K$. The following are equivalent:
\begin{enumerate}
\item $\FR$ is a Pr\"{u}fer domain;
\item For every block system matrix $P \in \FR^{(m+p) \times (n+p)}$ as in \eqref{eq:systemmatrix}, such that $A \in \FR^{p \times p}$ with $\det(A) \neq 0$, and such that $\mathcal{I}_p\left(\begin{bmatrix}
-A\\
C
\end{bmatrix}\right) = \mathcal{I}_p\left( \begin{bmatrix}
-A & B
\end{bmatrix} \right) = \FR$, defining $G= D + CA^{-1}B \in \K^{m \times n}$ then:
\begin{itemize}
\item[(i)] $\mathcal{S}_{p+1-k}(A) = \mathcal{D}_k(G)$ for all $k=1,\dots,\min\{m,n,p\}$.
\item[(ii)] $\mathcal{S}_k(A) = \FR$ for all $1 \leq k \leq p-\min\{m,n\}$
\item[(iii)] $\mathcal{D}_k(G)=\FR$ for all $p +1 \leq k \leq \min \{m,n\}$;
\item[(iv)] $\mathcal{S}_k(P)=\FR$ for all $k \leq p$;
\item[(v)] $\mathcal{S}_{k+p}(P) = \mathcal{N}_k(G)$ for all $k=1,\dots,\min\{m,n\}$.\end{itemize}
\end{enumerate}
\end{theorem}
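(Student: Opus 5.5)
The plan is to prove the two implications separately. For 1 $\Rightarrow$ 2, I will use a local--global argument. For 2 $\Rightarrow$ 1, I will build an explicit counterexample over any non-Pr\"ufer domain.

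\emph{1 $\Rightarrow$ 2.} Assume $\FR$ is Pr\"ufer and fix a maximal ideal $\mathfrak{m}$. By Lemma \ref{lem:localglobal}, $\FR_\mathfrak{m}$ is an elementary divisor domain. The coprimality hypotheses $\mathcal{I}_p([-A;C])=\mathcal{I}_p([-A\ B])=\FR$ survive localization, so the classical Rosenbrock Theorem over elementary divisor domains \cite{DNZ2025} applies to $P$ viewed over $\FR_\mathfrak{m}$. Translating invariant factors into ideals, as in Theorem \ref{thm:generalization}, gives (i)--(v) after localization at $\mathfrak{m}$.

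To globalize, I need that every ideal involved commutes with localization:
\begin{itemize}
\item Determinantal ideals commute with localization, since they are finitely generated.
\item Smith ideals of integral matrices commute with localization, because the quotients involved have a finitely generated, indeed invertible, denominator ideal (as in the proof of Theorem \ref{thm:smithchain}). The same holds for Smith ideals of matrices over $\K$ after clearing denominators.
\item $\mathcal{D}_k(G)=\mathcal{S}_k(G)^{-1}\cap\FR=(\mathcal{S}_k(G)+\FR)^{-1}$ is the inverse of a finitely generated ideal, so it also commutes with localization.
\item $\mathcal{N}_k(G)=\mathcal{S}_k(G)\cdot\mathcal{D}_k(G)$ is a product, so it commutes with localization.
\end{itemize}
Two integral ideals with equal localizations at every maximal ideal are equal, by \cite[Proposition 3.9]{Atiyah1969}. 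This yields (i)--(v) globally. A small bookkeeping point: when $\mathcal{S}_k(G)=\langle 0\rangle$ locally it is also zero globally, since rank does not change under localization.

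\emph{2 $\Rightarrow$ 1.} Assume $\FR$ is not Pr\"ufer. By Definition \ref{def:prufer}, item 2, there exist $a,b\neq 0$ with $\langle a,b\rangle$ not invertible. I will look for a small minimal pair, say $p=1$, with $G$ of size $2\times 2$; Theorem \ref{prop:vectorcase} shows that vectors cannot give a counterexample. The natural attempt is
\[ P=\begin{bmatrix} -ab & a & b\\ a & 0 & 0 \\ b & 0&0\end{bmatrix}. \]
The required coprimality reads $\langle ab,a,b\rangle=\langle a,b\rangle=\FR$, which fails here, so this example needs modification. Instead I will try $A=\begin{bmatrix}-a\end{bmatrix}$-type blocks, with $B$ and $C$ chosen so that their entries together with $A$ generate $\FR$ (for instance including a $1$ entry). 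I also want $G=D+CA^{-1}B$ to be a rank-one fraction matrix, such as $\frac{1}{ab}\begin{bmatrix} a\\ b\end{bmatrix}\begin{bmatrix} a& b\end{bmatrix}$, whose Smith ideals involve $\langle a,b\rangle$ and $\langle a,b\rangle^{-1}$.

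The idea is that (i) or (v) forces an identity of the form $\langle a,b\rangle\cdot\langle a,b\rangle^{-1}=\FR$, or an equality of the form $\mathcal{D}_1(G)=\mathcal{S}_1(A)$ that fails unless $\langle a,b\rangle$ is invertible. I will compute $\mathcal{I}_1(G)$ and $\mathcal{I}_2(G)$, then $\mathcal{S}_k(G)$ and $\mathcal{D}_k(G)$ using Lemma \ref{lem:inverse2gen}, and check that one of (i)--(v) contradicts non-invertibility.

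The main obstacle is designing this counterexample: making the coprimality conditions hold while keeping $\langle a,b\rangle$ visible in the Smith ideals of $G$. I would first try $p=1$ or $p=2$, with $G$ of size $2\times2$ and $D$ chosen to adjust the numerators. The first thing I will try is $A=\mathrm{diag}(a,b)$ with $B=C=I_2$, so that $G=D+A^{-1}$ and coprimality is automatic. Choosing $D=0$, the matrix $\mathrm{diag}(1/a,1/b)$ gives
\[ \mathcal{S}_1(G)=\tfrac{1}{ab}\langle a,b\rangle, \qquad \mathcal{S}_2(G)=\tfrac{1}{ab}\langle a,b\rangle^{-1}\cdot\langle 1\rangle \text{ (suitably scaled)}, \]
while $\mathcal{S}_1(A)=\langle a,b\rangle$ and $\mathcal{S}_2(A)=\langle ab\rangle:\langle a,b\rangle$. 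Comparing (i) for $k=1,2$ should then force $\langle a,b\rangle$ to be invertible, which gives the contradiction.
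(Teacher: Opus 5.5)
Your $1\Rightarrow 2$ argument is the paper's: localize at each maximal ideal, apply the elementary divisor domain version of Rosenbrock's Theorem \cite{DNZ2025} together with Theorem \ref{thm:generalization}, check that every ideal involved commutes with localization, and globalize. That part is fine. For $2\Rightarrow 1$ you also arrive at exactly the paper's matrix, $A=\mathrm{diag}(a,b)$, $B=C=I_2$, $D=0$.

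\textbf{The gap.} You propose to get the contradiction from item (i), but (i) does not detect non-invertibility of $\langle a,b\rangle$.
\begin{itemize}
\item At $k=1$: by Lemma \ref{lem:inverse2gen}, $\mathcal{S}_2(A)=\langle ab\rangle:\langle a,b\rangle=ab\,\langle a,b\rangle^{-1}=\langle a\rangle\cap\langle b\rangle$. Also $\mathcal{D}_1(G)=\bigl(\tfrac{1}{ab}\langle a,b\rangle\bigr)^{-1}\cap\FR=\langle a\rangle\cap\langle b\rangle$. So (i) at $k=1$ holds over every domain.
\item At $k=2$: $\mathcal{S}_2(G)=\tfrac{1}{ab}\,\mathcal{S}_2(\mathrm{diag}(b,a))=\langle a,b\rangle^{-1}$, not the scaled expression you wrote. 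Hence $\mathcal{D}_2(G)=(\langle a,b\rangle^{-1})^{-1}$, the divisorial closure of $\langle a,b\rangle$. So (i) at $k=2$ only says that $\langle a,b\rangle$ is divisorial, which is weaker than invertible.
\end{itemize}

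Concretely, take $\FR=\mathbb{F}[t^2,t^3]$ with $a=t^2$, $b=t^3$. Then $\langle a,b\rangle=t^2\mathbb{F}[t]$, $\langle a,b\rangle^{-1}=\mathbb{F}[t]$, and $\FR:\mathbb{F}[t]=t^2\mathbb{F}[t]$. The ideal is divisorial but not invertible, and (i)--(iv) all hold for this $P$. Your route would succeed over a UFD such as $\mathbb{F}[x_1,x_2]$, where $\langle x_1,x_2\rangle^{-1}=\FR$, but not over an arbitrary non-Pr\"ufer domain.

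\textbf{The fix.} Test (v) at $k=1$, as the paper does. Since $\det P=\pm1$, we have $\mathcal{S}_3(P)=\FR$. On the other hand,
$\mathcal{N}_1(G)=\mathcal{S}_1(G)\cdot\mathcal{D}_1(G)=\tfrac{1}{ab}\langle a,b\rangle\cdot ab\,\langle a,b\rangle^{-1}=\langle a,b\rangle\cdot\langle a,b\rangle^{-1}$,
which differs from $\FR$ precisely because $\langle a,b\rangle$ is not invertible. In the $\mathbb{F}[t^2,t^3]$ example this gives $\mathcal{N}_1(G)=t^2\mathbb{F}[t]\neq\FR$.
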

\begin{proof}
We first show that the ideal-theoretic version of Rosenbrock's Theorem holds over any Pr\"{u}fer domain; then, we exhibit a counterexample valid over any non-Pr\"{u}fer integral domain.
\begin{itemize}
\item[$1 \Rightarrow 2$]

Fix a maximal ideal $\mathfrak{m} \subset \FR$. By Lemma \ref{lem:localglobal}, $\FR_\mathfrak{m} = \left\{ \frac{a}{b} \mid a,b \in \FR, b \not\in \mathfrak{m} \right\}$ is an elementary divisor domain. Hence, we can apply \cite[Theorem 3.5]{DNZ2025} to relate, over the ring $\FR_\mathfrak{m}$, the Smith forms of $P$ and $A$ and the Smith-McMillan form of $G$. Denoting by $I_\mathfrak{m}=\{ \frac{a}{b} \mid a \in I, b \in \FR, b \not\in \mathfrak{m} \}$ the localization of the ideal $I$ at $\mathfrak{m}$, it follows in turn by Theorem \ref{thm:generalization} that the equalities (i)-(v) hold for the localizations $\mathcal{S}_i(A)_\mathfrak{m}, \mathcal{S}_j(P)_\mathfrak{m}, \mathcal{D}_k(G)_\mathfrak{m}, \mathcal{N}_\ell(G)_\mathfrak{m}, \FR_\mathfrak{m}$. These identifications require the general identities in \cite[Theorem 3.4]{gilmer1968}. Namely, $I_\mathfrak{m} \cap J_\mathfrak{m} = (I \cap J)_\mathfrak{m}$ and, if $J$ is finitely generated, then  $(I:J)_\mathfrak{m} = I_\mathfrak{m} : J_\mathfrak{m}$. Since $\mathfrak{m}$ {was arbitrary}, the statement then follows from  the identity $I=\cap_\mathfrak{m} I_\mathfrak{m}$, which by \cite[Proposition 3.9]{Atiyah1969} holds for every ideal $I$ of $\FR$.

\item[$2 \Rightarrow 1$] Suppose that $\FR$ is not a Pr\"{u}fer domain. Then there exist two nonzero elements $r_1,r_2 \in \FR$ such that $\langle r_1, r_2 \rangle$ is not invertible. Take $A=\begin{bmatrix}
r_1 & 0\\
0 & r_2
\end{bmatrix}$, $B=C=I_2$, $D=0$, so that 

\begin{equation}\label{eq:mortimer}
P = \begin{bmatrix}
-r_1 & 0 & 1 & 0\\
0 & -r_2 & 0 & 1\\
1 & 0 & 0 & 0\\
0& 1 & 0 & 0
\end{bmatrix} , \qquad G = \begin{bmatrix}
\frac{1}{r_1} & 0\\
0 & \frac{1}{r_2}
\end{bmatrix}.
\end{equation}
Manifestly, $P$ is unimodular and so $\mathcal{S}_k(P)=\FR$ for all $k$. On the other hand, $\mathcal{S}_1(G)=\frac{1}{r_1r_2} \langle r_1,r_2 \rangle$. Thus, by Lemma \ref{lem:inverse2gen}, $\mathcal{S}_1(G)^{-1} = \langle r_1 \rangle \cap \langle r_2 \rangle = \mathcal{D}_1(G)$. Using again Lemma \ref{lem:inverse2gen}, and since by assumption $\langle r_1, r_2 \rangle$ is not invertible, \[ \mathcal{N}_1(G) = \mathcal{S}_1(G) \cdot \mathcal{D}_1(G) = \langle r_1, r_2 \rangle \cdot \langle r_1, r_2 \rangle^{-1} \neq \FR = \mathcal{S}_3(P). \]
\end{itemize}
\end{proof}

\subsection{Hierarchy of coprimality notions beyond elementary divisor domains}

In \cite{DNZ2025}, Rosenbrock's Theorem is stated over elementary divisor domains under the following equivalent  \cite[Proposition 2.1]{DNZ2025} assumptions on the block matrices $\begin{bmatrix}
-A & B
\end{bmatrix}$ and $\begin{bmatrix}
-A\\
C
\end{bmatrix}$: 
\begin{itemize}
\item[(a)] their Smith form is trivial;
\item[(b)]they are, resp., right invertible and left invertible over $\FR$;
\item[(c)]they can be brought to Smith form just by, resp., right and left multiplication by a unimodular matrix;
\item[(d)]the pairs $A,B$ and $A,C$ are, resp., left and right coprime;
\item[(e)]they are completable to a unimodular matrix.
\end{itemize}
For an elementary divisor domain, it is clear via Theorem \ref{thm:generalization} that these assumptions are all equivalent to the ideal-theoretic condition 
\begin{itemize}
\item[(f)] $\mathcal{I}_p\left(\begin{bmatrix}
-A\\
C
\end{bmatrix}\right) = \mathcal{I}_p\left( \begin{bmatrix}
-A & B
\end{bmatrix} \right) = \FR$.
\end{itemize}  (We give an explicit proof of this claim in Proposition \ref{prop21} below). However, the situation is not the same for a general integral domain. Indeed, clearly conditions (a) and (c) are not applicable in general because a Smith form may not exist. Moreover, Proposition \ref{prop21} discusses the chain of implications of conditions (b), (d), (e), (f); we omit a discussion of the, obviously identical, chain of implications concerning the matrix $\begin{bmatrix}
-A\\
C
\end{bmatrix}$.

\begin{proposition}\label{prop21}
Let $\mathcal{R}$ be an integral domain and let $\begin{bmatrix}
-A & B
\end{bmatrix} \in \mathcal{R}^{p \times (p+n)}$ where $A \in \mathcal{R}^{p \times p}$. Consider the following properties:
\begin{enumerate}
\item $\mathcal{I}_p(\begin{bmatrix}
-A & B
\end{bmatrix}) = \mathcal{R}$;
\item $\begin{bmatrix}
-A & B
\end{bmatrix}$ is right invertible;
\item $A$ and $B$ are left coprime, i.e., if $U \in \FR^{p \times p}$ satisfies $UZ=\begin{bmatrix}
-A & B
\end{bmatrix}$ for some $Z \in \FR^{p \times (p+n)}$, then $\det(U) \in \FR^\times$;
\item There exist $C,D$ of suitable size such that $\begin{bmatrix}
-A & B\\
C & D
\end{bmatrix} \in \FR^{(p+n) \times (p+n)}$ is unimodular.
\end{enumerate}
Then, $4 \Rightarrow 1 \Leftrightarrow 2 \Rightarrow 3$, but generally the reverse implications do not hold.
\end{proposition}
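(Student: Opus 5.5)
We prove the forward implications first and then give counterexamples for the reverses. Write $M=\begin{bmatrix} -A & B\end{bmatrix}\in\FR^{p\times(p+n)}$.

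For $4\Rightarrow 1$, let $Q=\begin{bmatrix} M\\ N\end{bmatrix}$ be unimodular, with $N=\begin{bmatrix} C & D\end{bmatrix}$. Laplace expansion of $\det Q$ along the first $p$ rows writes the unit $\det Q$ as an $\FR$-linear combination of $p\times p$ minors of $M$. Hence $\mathcal{I}_p(M)$ contains a unit, so it equals $\FR$.

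For $1\Rightarrow 2$, write $1=\sum_J c_J \det M_{:,J}$, where $J$ runs over $p$-subsets of columns. For each $J$, let $E_J$ be the $(p+n)\times p$ matrix that embeds $\operatorname{adj}(M_{:,J})$ into the rows indexed by $J$, with zeros elsewhere. Then $ME_J=M_{:,J}\operatorname{adj}(M_{:,J})=\det(M_{:,J})I_p$. Consequently $X=\sum_J c_J E_J$ satisfies $MX=I_p$.

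For $2\Rightarrow 1$, suppose $MX=I_p$. By Cauchy--Binet, $1=\det(MX)=\sum_J \det M_{:,J}\det X_{J,:}\in\mathcal{I}_p(M)$.

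For $2\Rightarrow 3$, suppose $UZ=M$ and $MX=I_p$. Then $U(ZX)=I_p$, so $\det U\cdot\det(ZX)=1$ and $\det U$ is a unit.

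The reverse implications need counterexamples, and this is the main obstacle.

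For $1\not\Rightarrow 4$ I would use the non-free stably free module over the real sphere coordinate ring. Take $\FR=\mathbb{R}[x,y,z]/\langle x^2+y^2+z^2-1\rangle$, which is an integral domain. Take $p=1$, $n=2$, and $M=\begin{bmatrix} x & y & z\end{bmatrix}$, so $A=-x$ and $B=\begin{bmatrix} y & z\end{bmatrix}$. Then $\mathcal{I}_1(M)=\FR$, because $x\cdot x+y\cdot y+z\cdot z=1$. However, by the classical hairy-ball argument $M$ is not completable to a unimodular $3\times 3$ matrix. We also need $\det(A)\neq0$ if that is required, and $x\neq 0$ in $\FR$. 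This reverse implication is only claimed for a general integral domain, so citing this known example suffices.

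For $3\not\Rightarrow 2$, take $\FR=\mathbb{C}[x,y]$, $p=1$, $A=-x$ and $B=y$, so $M=\begin{bmatrix} x & y\end{bmatrix}$. If $UZ=M$ with $U\in\FR$, then $U$ divides both $x$ and $y$, so $U$ is a unit. Thus $A$ and $B$ are left coprime. On the other hand, $\mathcal{I}_1(M)=\langle x,y\rangle\neq\FR$, so by $1\Leftrightarrow 2$ the matrix $M$ is not right invertible.

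Since $1\Leftrightarrow2$, these two examples rule out every remaining reverse implication. The hardest step is justifying non-completability in the sphere example; I would cite the standard topological argument (the tangent bundle of $S^2$ is nontrivial) rather than reprove it.
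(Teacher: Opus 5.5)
Your proposal is correct and follows essentially the same route as the paper: the adjugate construction and Cauchy--Binet for $1\Leftrightarrow 2$, plus the same two counterexamples ($\begin{bmatrix} x & y\end{bmatrix}$ over $\mathbb{C}[x,y]$, and Swan's sphere example). The only differences are cosmetic. You get $4\Rightarrow 1$ by generalized Laplace expansion, where the paper proves $4\Rightarrow 2$ via a block column of the inverse. You get $2\Rightarrow 3$ from $U(ZX)=I_p$, where the paper proves $1\Rightarrow 3$ via Cauchy--Binet.
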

\begin{proof}
We first show that $1.$ and $2.$ are equivalent.
\begin{itemize}
\item[$1 \Rightarrow 2$] Let $N:=\binom{n+p}{p}$. For $i=1,\dots,N$, index by $M_i$ all the $p \times p$ submatrices of $\begin{bmatrix}
-A & B
\end{bmatrix}$ (having fixed the order of their columns as in the original matrix). By construction, there exists a permutation matrix $P_i \in \mathcal{R}^{(p+n) \times (p+n)}$ such that $M_i = \begin{bmatrix}
-A & B
\end{bmatrix} P_i \begin{bmatrix}
I_p\\
0
\end{bmatrix}$. On the other hand, $M_i \mathrm{adj}(M_i)=\det(M_i) I_p$ and by assumption there exist $c_i \in \mathcal{R}$ such that \[ 1 = \sum_{i=1}^{N}c_i \det(M_i) \Rightarrow \begin{bmatrix}
-A & B
\end{bmatrix} \left( \sum_{i=1}^N c_i P_i \begin{bmatrix}
\mathrm{adj}(M_i)\\
0
\end{bmatrix}   \right) = I_p. \]
\item[$2 \Rightarrow 1$] Assume that $I_p=-AX+BY$ for appropriate matrices $ X,Y$ over $\mathcal{R}$. Taking determinants, it follows immediately by the Cauchy-Binet formula that $1$ is expressible as a linear combination of the $p \times p$ minors of $\begin{bmatrix}
-A & B
\end{bmatrix}$.
\end{itemize}
We now discuss the other implications or lack thereof.
\begin{itemize}
\item[$1 \Rightarrow 3$] By the Cauchy-Binet formula, $\FR = \mathcal{I}_p\left(\begin{bmatrix}
-A & B
\end{bmatrix}\right) \subseteq \langle \det(U) \rangle \subseteq \FR$.
\item[$3 \not\Rightarrow 1$] Consider for example $\FR=\mathbb{C}[x_1,x_2]$ and take $A=x_1$, $B=x_2$.
\item[$4 \Rightarrow 2$] A right inverse is given by the appropriate block column of $\begin{bmatrix}
-A & B\\
C & D
\end{bmatrix}^{-1}$.
\item[$2 \not \Rightarrow 4$] \cite[p. 270]{Swan1962} Let $\FR=\mathbb{R}[x_1,x_2,x_3]/\langle x_1^2+x_2^2+x_3^2-1 \rangle$ and take $A=-x_1$, $B=\begin{bmatrix}
x_2 & x_3
\end{bmatrix}$. Then $\begin{bmatrix}
x_1\\
x_2\\
x_3
\end{bmatrix}$ is manifestly a right inverse, but a unimodular completion cannot be found for topological reasons \cite{Swan1962}.
\end{itemize}
\end{proof}

Combining the second half of the proof of Theorem \ref{thm:rosenbrock} with Proposition \ref{prop21}, we see that Rosenbrock's Theorem cannot hold for non-Pr\"{u}fer domains not even assuming condition $4.$ of Proposition \ref{prop21}, because the counterexample \eqref{eq:mortimer}  satisfies even this stronger assumption. However, the first part of the proof of Theorem \ref{thm:rosenbrock} for Pr\"{u}fer domains may leave the reader in doubt about whether, in that class of rings, the weaker coprimality condition $3.$ of Proposition \ref{prop21} may suffice to establish the theorem. The answer is negative. Note that the proof itself would break down by weakening the assumption in this manner, because (unlike the ideal-theoretic condition $1.$ of Proposition \ref{prop21}) coprimality is not generally preserved by localization. Indeed, the statement itself is not always true under the weaker assumption $3.$ We exhibit a counterexample in Example \ref{ex:final}.

\begin{example}\label{ex:final}
Consider $\FR = \mathbb{Z}[\sqrt{-5}]$, which is a Dedekind domain (and hence a Pr\"{u}fer domain) but not a GCD domain \cite[Chapter 3]{Marcus1977}. Take $A=2$, $B=1+\sqrt{-5}$, $C=1-\sqrt{-5}$, $D=0$. Then,
\begin{equation}\label{eq:mortimer2}
P  = \begin{bmatrix}
-2 & 1+\sqrt{-5}\\
1-\sqrt{-5} & 0
\end{bmatrix} \in \FR^{2 \times 2}, \qquad G = 3 \in \mathbb{Q}(\sqrt{-5}). 
\end{equation}
The block matrix $P$ in \eqref{eq:mortimer2} does not satisfy the assumptions of the ideal-theoretic Rosenbrock's Theorem \ref{thm:rosenbrock} because $\langle 2, 1 + \sqrt{-5} \rangle \subsetneq \FR$. Nonetheless, it is a very famous example in number theory that the pairs $2, 1 \pm \sqrt{-5}$ do not have a non-unit common divisor in $\FR$ \cite[pp. 42-43]{Marcus1977}, and therefore $P$ satisfies the weaker coprimality assumption.

Let us now demonstrate that the predictions of Rosenbrock's Theorem are incorrect for the example \eqref{eq:mortimer2}. Obviously, $\mathcal{S}_1(A)=\langle 2 \rangle $ and  $\mathcal{S}_1(G)=\mathcal{N}_1(G)=\langle 3 \rangle$, implying $\mathcal{D}_1(G)=\FR \neq \mathcal{S}_1(A)$. It is only slightly more laborious to verify that $\mathcal{S}_1(P) = \langle 2, 1 + \sqrt{-5} \rangle \neq \FR$ and $\mathcal{S}_2(P)=3 \mathcal{S}_1(P) \neq \mathcal{N}_1(G)$.
\end{example}

\subsection{Some special cases of validity beyond Pr\"{u}fer domains}

We now prove in Theorem \ref{prop:vectorcase} that, if either $m=1$ or $n=1$ (or both), the ideal-theoretic Rosenbrock's Theorem holds even when $\FR$ is not a Pr\"{u}fer domain. This includes in particular the case of a scalar Schur complement $G \in \K$.

\begin{theorem}\label{prop:vectorcase}
    Let $\FR$ be an integral domain with field of fractions $\K$, and suppose that the matrices $P,A,B,C,D,G$ are defined as in, and satisfy the assumptions of, item 2. of Theorem \ref{thm:rosenbrock}. If $\min \{m,n\}=1$, then the equalities (i)-(v) of Theorem \ref{thm:rosenbrock} hold.
\end{theorem}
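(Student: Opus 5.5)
By transposition (which swaps the roles of $B$ and $C$, turns $G$ into $G^T$, and preserves all determinantal and Smith ideals) I may assume $m=1$, so $C\in\FR^{1\times p}$, $D\in\FR^{1\times n}$ and $G$ is a row vector. The case $p=0$ is trivial, so let $p\geq 1$. Then (iii) is vacuous, and what must be shown is:
\begin{itemize}
\item[(ii)] $\mathcal{S}_k(A)=\FR$ for $k\le p-1$;
\item[(i)] $\mathcal{S}_p(A)=\mathcal{D}_1(G)$;
\item[(iv)] $\mathcal{S}_k(P)=\FR$ for $k\le p$;
\item[(v)] $\mathcal{S}_{p+1}(P)=\mathcal{N}_1(G)$.
\end{itemize}
Write $a=\det A$ and $h:=aG=aD+C\,\mathrm{adj}(A)B\in\FR^{1\times n}$, with entries $h_1,\dots,h_n$.

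The easy items come first. Every $p\times p$ minor of $\begin{bmatrix}-A\\ C\end{bmatrix}$ is either $\pm a$ or, by Laplace expansion along the row $C$, an element of $\mathcal{I}_{p-1}(A)$. Hence $\FR=\mathcal{I}_p\left(\begin{bmatrix}-A\\ C\end{bmatrix}\right)\subseteq \mathcal{I}_{p-1}(A)$. Therefore $\mathcal{I}_k(A)=\FR$ for $k\le p-1$, which gives (ii) together with $\mathcal{S}_p(A)=\langle a\rangle$. Similarly, $\mathcal{I}_p(P)\supseteq \mathcal{I}_p(\begin{bmatrix}-A & B\end{bmatrix})=\FR$, so $\mathcal{I}_k(P)=\FR$ for $k\le p$, giving (iv) and $\mathcal{S}_{p+1}(P)=\mathcal{I}_{p+1}(P)$. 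On the $G$ side, $\mathcal{S}_1(G)=a^{-1}\langle h_1,\dots,h_n\rangle$, so everything reduces to two claims:
\begin{itemize}
\item[(a)] $\langle a,h_1,\dots,h_n\rangle=\FR$. This gives $\mathcal{D}_1(G)=\{x\in\FR: xh_j\in a\FR\ \forall j\}=\langle a\rangle$, since $x=x\cdot 1\in a\FR$ for such $x$, and hence also $\mathcal{N}_1(G)=\langle h_1,\dots,h_n\rangle$.
\item[(b)] $\mathcal{I}_{p+1}(P)=\langle h_1,\dots,h_n\rangle$.
\end{itemize}

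For (a) I use the inverses provided by Proposition \ref{prop21}. Let $-AX+BW=I_p$ and $-YA+ZC=I_p$, and let $J=\langle a,h\rangle$. First, $C\,\mathrm{adj}(A)BW=C\,\mathrm{adj}(A)(I+AX)=C\,\mathrm{adj}(A)+aCX$. Since the entries of $C\,\mathrm{adj}(A)B$ lie in $J$ (they equal $h_j$ minus multiples of $a$), this shows $C\,\mathrm{adj}(A)\equiv 0 \pmod J$. Next, $\mathrm{adj}(A)=(-YA+ZC)\mathrm{adj}(A)=-aY+Z\,C\,\mathrm{adj}(A)\equiv 0\pmod J$. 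Hence $\FR=\mathcal{I}_{p-1}(A)\subseteq J$.

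For (b), the inclusion $\supseteq$ holds because each $h_j$ is, up to sign, the $(p+1)$-minor of $P$ on the columns of $A$ and the $j$-th column of $B$; this follows from the Schur determinant formula. The reverse inclusion is the step I expect to be the main obstacle, and I would handle it with a unimodular row reduction. Put $q:=(C\,\mathrm{adj}(A),\,a)\in\FR^{1\times(p+1)}$. Then $q\begin{bmatrix}-A\\ C\end{bmatrix}=0$, the row $q$ is unimodular by (a), and $q\begin{bmatrix}B\\ D\end{bmatrix}=h$. Let $T=\begin{bmatrix}Y & Z\\ \multicolumn{2}{c}{q}\end{bmatrix}$, i.e. the matrix with block row $[Y\ \ Z]$ stacked on the row $q$. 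Choose $e\in\FR^{p+1}$ with $qe=1$. Then
\[T\begin{bmatrix}\begin{bmatrix}-A\\ C\end{bmatrix} & e\end{bmatrix}=\begin{bmatrix}I_p & *\\ 0 & 1\end{bmatrix},\]
so $\det T$ is a unit and $T$ is unimodular. Now $TP=\begin{bmatrix}I_p & YB+ZD\\ 0 & h\end{bmatrix}$, whose $(p+1)$-minors visibly generate $\langle h_1,\dots,h_n\rangle$. Theorem \ref{thm:unimodularinvariance} then gives $\mathcal{I}_{p+1}(P)=\langle h\rangle$, which proves (b) and hence (v), and completes the argument.
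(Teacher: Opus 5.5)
Your proof is correct. Its skeleton matches the paper's; you normalize to $m=1$ where the paper takes $n=1$, which is immaterial. In both, (ii)--(iv) come from inclusions of determinantal ideals, and everything reduces to two claims: the comaximality $\langle \det A\rangle+\langle h_1,\dots,h_n\rangle=\FR$, and $\mathcal{I}_{p+1}(P)=\langle h_1,\dots,h_n\rangle$, where your $h$ is the paper's $H$.

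For comaximality, the paper writes down explicit B\'ezout coefficients while you argue modulo $J$. This is the same information packaged differently.

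The genuine difference is the hard inclusion $\mathcal{I}_{p+1}(P)\subseteq\langle h_1,\dots,h_n\rangle$. The paper uses the unimodular kernel vector $\begin{bmatrix}\mathrm{adj}(A)B\\ \det A\end{bmatrix}$ of $\begin{bmatrix}-A & B\end{bmatrix}$ and applies Cramer's rule to each $(p+1)$-row selection $EP$, concluding minor by minor without constructing anything. You instead stack the left inverse $\begin{bmatrix}Y & Z\end{bmatrix}$ on the unimodular left-kernel row $q$. This gives a unimodular $T$ with $TP=\begin{bmatrix}I_p & YB+ZD\\ 0 & h\end{bmatrix}$. After one more block column operation, this shows that $P$ is unimodularly equivalent over $\FR$ itself to $I_p\oplus h$.

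Your route gives more than the statement asks. Items (iv) and (v) come out together. Moreover, $T^{-1}$ is a unimodular completion of $\begin{bmatrix}-A\\ C\end{bmatrix}$ over an arbitrary integral domain. That is exactly the ingredient for which Theorem \ref{thm:allParethesame} needs the Hermite hypothesis in general.

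One small streamlining is available. The entries of $q$ are, up to sign, precisely the $p\times p$ minors of $\begin{bmatrix}-A\\ C\end{bmatrix}$, so $q$ is unimodular directly from the hypothesis rather than via (a). This also makes the $Y,Z$ step in your proof of (a) superfluous. Once $C\,\mathrm{adj}(A)\equiv 0 \pmod J$ and $a\in J$, all entries of $q$ lie in $J$, and hence $J=\FR$.
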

\begin{proof}
    We give a proof assuming $n=1$; the case $m=1$ is analogous and omitted. To alleviate the notation, throughout the proof we denote $H:=\det(A) G \in \FR^m$.
    \begin{itemize}
        \item[(ii)] Up to a sign, the $p \times p$ minors of $\begin{bmatrix}
        -A & B
    \end{bmatrix}$ are precisely $\det(A)$ and, by Cramer's rule, the elements of the vector $v:=\mathrm{adj}(A)B \in \FR^{p}$. Hence, using Lemma \ref{lem:generalinclusions}, \[ \FR = \mathcal{I}_p \left( \begin{bmatrix}
        -A  & B
    \end{bmatrix} \right) = \langle \det(A),v_1,\dots,v_p \rangle \subseteq \mathcal{I}_{p-1}(A) \subseteq \cdots \subseteq \mathcal{I}_1(A) \subseteq \FR . \]
    \item[(iii)] Vacuously true. 
    \item[(iv)] Using also Lemma \ref{lem:generalinclusions},
    \[ \FR = \mathcal{I}_p\left( \begin{bmatrix}
        -A & B
    \end{bmatrix} \right) \subseteq \mathcal{I}_p(P) \subseteq \cdots \subseteq \mathcal{I}_1(P) \subseteq \FR.   \]
    \item[(i)] By item (ii), $\mathcal{S}_p(A)= \langle \det(A) \rangle$. Observe that $\displaystyle \mathcal{S}_1(G)=(\det(A))^{-1} \mathcal{I}_1(H)$, implying in turn \[ \mathcal{D}_1(G) = \{ a \in \FR \mid a \mathcal{S}_1(G) \subseteq \FR \} = \{ a \in \FR \mid a \mathcal{I}_1(H) \subseteq \langle \det(A) \rangle  \}.   \]
    Hence, it is obvious that $\langle \det(A) \rangle \subseteq \mathcal{D}_1(G)$; we must prove the reverse inclusion.
    Defining $v \in \FR^p$ as in the proof of item (ii), we have $\displaystyle H = \det(A) D + C v$. We claim that $\langle \det(A) \rangle +  \mathcal{I}_1(H) = \FR$, or equivalently there exist $x \in \FR$ and $y \in \mathcal{I}_1(H)$ such that $x \det(A)+y=1$. Suppose $a \in \mathcal{D}_1(G)$, then $ay=b \det(A)$ for some $b \in \FR$, and hence $a = a x \det(A) + a y = \det(A) (ax+b) \in \langle \det(A) \rangle$.

    It remains to prove the claim. By the proof of item (ii) and by Proposition \ref{prop21}, there exist a scalar $x_1 \in \FR$, a vector $w_1 \in \FR^p$, and two matrices $Z_1 \in \FR^{p \times p}, Z_2 \in \FR^{p \times m}$ such that $x_1 \det(A) + w_1^T \mathrm{adj}(A) B = 1 $ and $Z_2 C - Z_1 A = I_p$. Define $w:=Z_2^T w_1 \in \FR^m$, $y:=w^T H = \det(A) w_1^T Z_2 G \in \mathcal{I}_1(H)$ and $x:= x_1 - w_1^T(Z_1 B + Z_2 D) \in \FR$. Then, 
    \[ x \det(A) + y = x_1 \det(A) + w_1^T ( Z_2 C \mathrm{adj}(A) B - Z_1 \det(A) B ) = x_1 \det(A) + w_1^T \mathrm{adj}(A) B =1   , \]
    where in the last step we used the identity $\det(A) I_p = A \cdot \mathrm{adj}(A)$.

    \item[(v)] By item (iv), it suffices to prove $\mathcal{I}_{p+1}(P)=\mathcal{N}_1(G)$. Using \cite[Theorem 4.2]{DNZ2025}, we have
    $ \displaystyle \det\left(\begin{bmatrix}
        -A & B\\
        e_\ell^T C & e_\ell^T D
    \end{bmatrix}\right) = \det(-A) e_\ell^T G  $ for all $\ell=1,\dots,m$. Therefore,
    \[ \mathcal{N}_1(G) = \mathcal{S}_1(G)  \cdot \mathcal{D}_1(G) = \mathcal{I}_1(H) \subseteq \mathcal{I}_{p+1}(P). \]
    For the reverse inclusion, observe that an arbitrary $(p+1) \times (p+1)$ minor of $P$ has the form $\det(EP)$, where $E \in \{0,1\}^{(p+1) \times (p+m)}$ is a matrix that selects $(p+1)$ rows. On the other hand, $\displaystyle E P \begin{bmatrix}
        \mathrm{adj}(A) B\\
        \det(A)
    \end{bmatrix} = E \begin{bmatrix}
        0\\
        H
    \end{bmatrix}.$ Therefore, by Cramer's rule and letting $M_i$ denote the matrix obtained from $EP$ by replacing the $i$-th column with $\displaystyle E\begin{bmatrix}
        0\\
        H
    \end{bmatrix}$, we have $\displaystyle \det(EP) \left( \begin{bmatrix}
        \mathrm{adj}(A)B\\
        \det(A)
        \end{bmatrix}\right)_i = \det(M_i) \in \mathcal{I}_1(H)$, for $i=1,\dots,p+1$.
    Taking into account that $\mathcal{I}_1\left( \begin{bmatrix}
        \mathrm{adj}(A)B\\
        \det(A)
        \end{bmatrix}\right)=\mathcal{I}_p \left( \begin{bmatrix}
        -A & B
    \end{bmatrix} \right)=\FR $, we conclude $\mathcal{I}_{p+1}(P) \subseteq \mathcal{I}_1(H)$.

    \end{itemize}

\end{proof}

Theorem \ref{prop:easy} is an analogue to Theorem \ref{prop:vectorcase}, but under the assumption $p=1$ and for integrally closed domains. Before we state and prove it, we give the technical Lemma \ref{lem:claim}.

\begin{lemma}\label{lem:claim}
    Let $\FR$ be an integral domain with field of fractions $\K$, and suppose that the matrices $P,A,B,C,D,G$ are defined as in, and satisfy the assumptions of, item 2. of Theorem \ref{thm:rosenbrock}. If $p=1$, then $\mathcal{I}_{k+1}(P)=A \mathcal{I}_k(G)$ for all $k=1,\dots,\min \{m,n\}$.
\end{lemma}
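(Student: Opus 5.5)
With $p=1$, $A=a\in\FR$ is a nonzero scalar, $B\in\FR^{1\times n}$ is a row, $C\in\FR^{m\times 1}$ is a column, and $G=D+a^{-1}CB$. The assumptions say $\langle a,b_1,\dots,b_n\rangle=\langle a,c_1,\dots,c_m\rangle=\FR$. By Definition~\ref{def:smithidealsforrationalmatrices}, $\mathcal{I}_k(G)=a^{-k}\mathcal{I}_k(aG)$. The identity to prove is therefore
\[ a^{k-1}\mathcal{I}_{k+1}(P)=\mathcal{I}_k(aD+CB), \]
an equality of integral ideals. I would prove it via the two inclusions $\mathcal{I}_{k+1}(P)\supseteq a\,\mathcal{I}_k(G)$ and $\mathcal{I}_{k+1}(P)\subseteq a\,\mathcal{I}_k(G)$.

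For $\supseteq$, take $k\times k$ row and column index sets $\alpha,\beta$ of $G$. The $(k+1)$-minor of $P$ using the first row and column together with $\alpha,\beta$ is, by the Schur complement determinant formula, $-a\det(G_{\alpha\beta})$. This is essentially \cite[Theorem 4.2]{DNZ2025}, as used in the proof of Theorem~\ref{prop:vectorcase}, with the sign absorbed. Hence every generator $a\det(G_{\alpha\beta})$ of $a\mathcal{I}_k(G)$ lies in $\mathcal{I}_{k+1}(P)$. Note that this direction uses no coprimality.

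For $\subseteq$, the remaining $(k+1)$-minors of $P$ are of two types: those avoiding the first row or column, and those avoiding both. I would handle them by a row/column operation trick over $\K$. Using
\[ P\begin{bmatrix}1 & a^{-1}B\\ 0 & I\end{bmatrix}=\begin{bmatrix}-a & 0\\ C & G\end{bmatrix}, \]
and then clearing $C$ on the left, $P$ is equivalent over $\K$ to $(-a)\oplus G$. Cauchy--Binet then expresses every $(k+1)$-minor of $P$ as a $\K$-combination of $a\cdot(k\text{-minors of }G)$ and $(k+1)$-minors of $G$. The coefficients, however, involve $a^{-1}$, so this alone does not give integrality.

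The real content, and the expected main obstacle, is to remove those denominators using coprimality. The plan is to work with the minors directly. Any $(k+1)$-minor of $P$ avoiding row $1$ but using column $1$ is a minor of $[C\ D]$. Multiplying the column of $C$ by $a$ and subtracting $b_j$ times it from the other columns, we get $a\cdot$minor$[C\ D]_{\cdot}$ $=$ minor of $[aC\ \ aD+CB\ \text{stuff}]$. This seems messy, so instead I would use localization, as in the proof of Theorem~\ref{thm:rosenbrock}. Both sides are ideals of $\FR$, and equality of ideals can be checked after localizing at every maximal ideal $\mathfrak{m}$ (\cite[Proposition 3.9]{Atiyah1969}). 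Determinantal ideals commute with localization.

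At $\mathfrak{m}$, either $a$ is a unit or some $b_j$ is a unit, and likewise either $a$ or some $c_i$ is a unit. If $a$ is a unit, the equivalence above is unimodular over $\FR_\mathfrak{m}$, and Theorem~\ref{thm:unimodularinvariance} gives $\mathcal{I}_{k+1}(P)_\mathfrak{m}=\mathcal{I}_{k+1}((-a)\oplus G)=\mathcal{I}_k(G)+\mathcal{I}_{k+1}(G)\cdot\FR_\mathfrak{m}$. I would then need to verify $\mathcal{I}_{k+1}(G)\subseteq\mathcal{I}_k(G)$ over $\FR_\mathfrak{m}$, which holds by Lemma~\ref{lem:generalinclusions} once $G$ is integral there, as it is since $a^{-1}\in\FR_\mathfrak{m}$.

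If $a\in\mathfrak{m}$, then some $b_j$ and some $c_i$ are units. After unimodular permutations and column/row scaling over $\FR_\mathfrak{m}$, take $b_1=c_1=1$. Elementary operations then clear the other entries of $B$ and $C$, and $P$ becomes unimodularly equivalent to $\begin{bmatrix}-a&1\\1&d'\end{bmatrix}\oplus D'$-type form. Pivoting on the $1$'s yields $I_2\oplus \widetilde D$ with $a\widetilde D$ unimodularly equivalent to $aD+CB$ modulo the same operations. I would check this last identification carefully, since it is the crux. Granting it, $\mathcal{I}_{k+1}(P)_\mathfrak{m}=\mathcal{I}_{k-1}(\widetilde D)$ and $a^{1-k}\mathcal{I}_k(aD+CB)$ must match this, which I expect to follow because $aG$ is equivalent to $a\oplus a\widetilde D$-like blocks up to the rank-one correction.
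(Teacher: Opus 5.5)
Your overall route is the paper's: localize at each maximal ideal $\mathfrak{m}$ and split on whether $a\in\mathfrak{m}$. The global inclusion $a\,\mathcal{I}_k(G)\subseteq\mathcal{I}_{k+1}(P)$ via bordered minors is correct, and so is the case $a\notin\mathfrak{m}$, where $P$ is unimodularly equivalent over $\FR_\mathfrak{m}$ to $(-a)\oplus G$.

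The gap is the case $a\in\mathfrak{m}$. You flag it as the crux and then only grant it, and the sketch you give there is not correct.

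First, after normalizing $b_1=c_1=1$ and clearing the rest of $B$ and $C$, you do not get a direct sum $\begin{bmatrix}-a&1\\1&d'\end{bmatrix}\oplus D'$. You get $\begin{bmatrix}-a&1&0\\1&D_{11}&D_{12}\\0&D_{21}&D_{22}\end{bmatrix}$. Pivoting then yields $I_2\oplus F$, where $\alpha:=1+aD_{11}$ and the lower block carries the correction $F=D_{22}-\alpha^{-1}aD_{21}D_{12}$.

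Second, your expectation that $aG$ reduces to something like $a\oplus a\widetilde D$ would give $\mathcal{I}_k(aG)_\mathfrak{m}=a^k\mathcal{I}_{k-1}(\widetilde D)_\mathfrak{m}$. That is, $a\,\mathcal{I}_k(G)_\mathfrak{m}=a\,\mathcal{I}_{k+1}(P)_\mathfrak{m}$, which is off by a factor of $a$. For $k=1$ it would give $a\FR_\mathfrak{m}$ instead of $\FR_\mathfrak{m}$.

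The missing idea is to apply the same unimodular $U,V$ over $\FR_\mathfrak{m}$ (with $UC=e_1$ and $BV=e_1^T$) to $aG=aD+CB$. Partitioning $UDV$ as above, with $D_{11}$ a scalar, this gives
\[ U(aG)V=a\,UDV+e_1e_1^T=\begin{bmatrix}\alpha & aD_{12}\\ aD_{21} & aD_{22}\end{bmatrix}. \]
Here $\alpha=1+aD_{11}$ is a unit of $\FR_\mathfrak{m}$ precisely because $a\in\mathfrak{m}$. Pivoting on $\alpha$ shows that $aG$ is unimodularly equivalent to $\alpha\oplus aF$, with the same $F$ as for $P$.

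Using Lemma~\ref{lem:generalinclusions} for both direct sums, one then gets $a^k\mathcal{I}_k(G)_\mathfrak{m}=\mathcal{I}_{k-1}(aF)_\mathfrak{m}=a^{k-1}\mathcal{I}_{k-1}(F)_\mathfrak{m}=a^{k-1}\mathcal{I}_{k+1}(P)_\mathfrak{m}$. Cancelling $a^{k-1}$ gives the claim locally. With this computation supplied, your argument coincides with the paper's proof.
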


\begin{proof}
    We claim that the statement holds locally at an arbitrary maximal ideal $\mathfrak{m}$; invoking \cite[Proposition 3.9]{Atiyah1969} then concludes the proof. To prove the claim, there are two cases.
        \begin{itemize}
            \item If $A \not\in \mathfrak{m}$, then $A$ is invertible in $\FR_\mathfrak{m}$, and therefore $P$ is unimodularly (over $\FR_\mathfrak{m}$) equivalent to $\displaystyle \begin{bmatrix}
            1 & 0\\
            0 & G
        \end{bmatrix}$, yielding the claim.
        \item If $A \in \mathfrak{m}$, then $\FR_\mathfrak{m}=\langle A \rangle_\mathfrak{m} + \mathcal{I}_1(B)_\mathfrak{m} =\mathcal{I}_1(B)_\mathfrak{m}$. Since $\FR_\mathfrak{m}$ is a local ring, this implies that the vector $B$ contains a unit, and hence we can construct a unimodular (over $\FR_\mathfrak{m}$) matrix $V$ such that $B  V= e_1^T$. (For example, if $\displaystyle B=\begin{bmatrix}
            \beta & B_1
        \end{bmatrix}$ where $\beta$ is a unit, which is no loss of generality up to right multiplication by a permutation matrix, then $V=\displaystyle \begin{bmatrix}
            \beta^{-1} & -\beta^{-1} B_1\\
            0 & I
        \end{bmatrix} $ is such a unimodular matrix.) Similarly, there exists a unimodular (over $\FR_\mathfrak{m}$) matrix $U$ such that $U C = e_1$. Hence, 
        \[  \begin{bmatrix}
            1 & 0\\
            0 & U
        \end{bmatrix} P \begin{bmatrix}
            1 & 0\\
            0 & V
        \end{bmatrix} = \begin{bmatrix}
            -A & e_1^T \\
            e_1 & U D V
        \end{bmatrix} =: \begin{bmatrix}
            -A & 1 & 0\\
            1 & D_{11} & D_{12}\\
            0 & D_{21} & D_{22}
        \end{bmatrix}, \]
        where the last step is just one more block partition, with $D_{11} \in \FR_{\mathfrak{m}}$ and $D_{22} \in \FR_{\mathfrak{m}}^{(m-1) \times (n-1)}$. By performing further elementary unimodular equivalences, and noting that $\alpha :=  1 + A D_{11} \equiv 1 \bmod \mathfrak{m}$ is a unit of $\FR_\mathfrak{m}$, we conclude that $P$ is unimodularly equivalent to
        \[   \begin{bmatrix}
            0 & 1 & 0\\
            \alpha & D_{11} & D_{12}\\
            0 & \alpha^{-1} D_{21} & F
        \end{bmatrix},  \qquad F : = D_{22} - \alpha^{-1} A D_{21} D_{12}. \]
        Just a couple more elementary row and column transformations lead us to conclude that $P$ and $\displaystyle \begin{bmatrix}
            I_2 & 0\\
            0 & F
        \end{bmatrix}$ are unimodularly equivalent over $\FR_\mathfrak{m}$.
        This in turn implies $\mathcal{I}_{k+1}(P)_\mathfrak{m} = \mathcal{I}_{k-1}(F)_\mathfrak{m}$ by Theorem \ref{thm:unimodularinvariance}.  On the other hand, \[  U (A G) V =A (U D V) + e_1 e_1^T = \begin{bmatrix}
            \alpha & A D_{12}\\
            A D_{21} & A D_{22}
        \end{bmatrix}, \]
        and thus $AG$ is unimodularly equivalent to $\displaystyle \begin{bmatrix}
            \alpha & 0\\
            0  & A F
        \end{bmatrix}$ thus proving via Theorem \ref{thm:unimodularinvariance} that $A^k \mathcal{I}_k(G)_\mathfrak{m}=\mathcal{I}_k(AG)_\mathfrak{m} = \mathcal{I}_{k-1}(AF)_\mathfrak{m} = A^{k-1} \mathcal{I}_{k-1}(F)_\mathfrak{m}$, leading to $A \mathcal{I}_k(G)_\mathfrak{m} = \mathcal{I}_{k-1}(F)_\mathfrak{m} = \mathcal{I}_{k+1}(P)_\mathfrak{m}$. This shows the claim.
        \end{itemize}
\end{proof}

\begin{theorem}\label{prop:easy}
    Let $\FR$ be an integrally closed domain with field of fractions $\K$, and suppose that the matrices $P,A,B,C,D,G$ are defined as in, and satisfy the assumptions of, item 2. of Theorem \ref{thm:rosenbrock}. If $p=1$, then the equalities (i)-(v) of Theorem \ref{thm:rosenbrock} hold.
\end{theorem}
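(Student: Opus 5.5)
The plan is to reduce everything to Lemma \ref{lem:claim}, namely $\mathcal{I}_{k+1}(P)=A\,\mathcal{I}_k(G)$ for $k=1,\dots,\min\{m,n\}$. For $k=0$ the identity $\mathcal{I}_1(P)=\FR$ plays the same role, and we prove it first.

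\textbf{The trivial items.} Since $p=1$, item (ii) is vacuous, because it requires $1\le k\le 1-\min\{m,n\}\le 0$. For item (iv), $\mathcal{I}_1(P)$ contains $A$ and the entries of $B$. These generate $\mathcal{I}_1([-A\ B])=\FR$, so $\mathcal{I}_1(P)=\FR$ and hence $\mathcal{S}_1(P)=\FR$.

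\textbf{Smith ideals of $G$ in terms of those of $P$.} Next I express the Smith ideals of $G$ through the determinantal fractional ideals of $G$, taking $d=A$ in Definition \ref{def:smithidealsforrationalmatrices}; this is allowed since $AG=AD+CB\in\FR^{m\times n}$. Then $\mathcal{I}_k(AG)=A^k\mathcal{I}_k(G)$. Whenever these ideals are nonzero,
\[
\mathcal{S}_k(G)=A^{-1}\big(A^k\mathcal{I}_k(G):A^{k-1}\mathcal{I}_{k-1}(G)\big)=\mathcal{I}_k(G):\mathcal{I}_{k-1}(G),
\]
with the convention $\mathcal{I}_0(G)=\FR$. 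Combining this with Lemma \ref{lem:claim}, the common factor $A$ cancels in the ideal quotient. For $k\ge 2$ this gives
\[
\mathcal{S}_{k+1}(P)=A\mathcal{I}_k(G):A\mathcal{I}_{k-1}(G)=\mathcal{S}_k(G).
\]
The degenerate case where $\mathcal{I}_{k-1}(G)=\langle 0\rangle$ matches on both sides, since then $\mathcal{I}_k(P)=\langle0\rangle$ as well.

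\textbf{Items (iii) and (v) for $k\ge 2$.} Here integral closedness enters. By Theorem \ref{thm:smithintegral} applied to $P$, we get $\mathcal{S}_k(G)=\mathcal{S}_{k+1}(P)\subseteq\FR$. Hence $1\in\mathcal{S}_k(G)^{-1}$, so $\mathcal{D}_k(G)=\FR$; if $\mathcal{S}_k(G)=\langle 0\rangle$, this holds by definition. This proves (iii). It also gives $\mathcal{N}_k(G)=\mathcal{S}_k(G)=\mathcal{S}_{k+1}(P)$, which is (v) for $k\ge2$.

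\textbf{The case $k=1$.} It remains to treat (i) and (v) for $k=1$, which I expect to be the main step. Since $\mathcal{I}_1(P)=\FR$, we have $\mathcal{S}_2(P)=\mathcal{I}_2(P)=A\mathcal{I}_1(G)$. Also $\mathcal{S}_1(G)=\mathcal{I}_1(G)=A^{-1}\mathcal{I}_1(AG)$ and $\mathcal{S}_1(A)=\langle A\rangle$. So I must show
\[
\mathcal{D}_1(G)=\{a\in\FR\mid a\,\mathcal{I}_1(AG)\subseteq\langle A\rangle\}=\langle A\rangle.
\]
Once this holds, $\mathcal{N}_1(G)=A\mathcal{I}_1(G)=\mathcal{S}_2(P)$ follows. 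The inclusion $\supseteq$ is clear. For the reverse inclusion I mimic the proof of Theorem \ref{prop:vectorcase}: it suffices to prove $\langle A\rangle+\mathcal{I}_1(AG)=\FR$. I would check this locally at each maximal ideal $\mathfrak{m}$ and conclude via \cite[Proposition 3.9]{Atiyah1969}.
\begin{itemize}
\item If $A\notin\mathfrak{m}$, the sum already contains a unit of $\FR_\mathfrak{m}$.
\item If $A\in\mathfrak{m}$, the proof of Lemma \ref{lem:claim} shows that $AG$ is unimodularly equivalent over $\FR_\mathfrak{m}$ to $\mathrm{diag}(\alpha,AF)$ with $\alpha$ a unit. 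Hence $\mathcal{I}_1(AG)_\mathfrak{m}=\FR_\mathfrak{m}$.
\end{itemize}
Since localization commutes with sums of ideals, the global identity $\langle A\rangle+\mathcal{I}_1(AG)=\FR$ follows. Now write $1=xA+y$ with $y\in\mathcal{I}_1(AG)$. For $a\in\mathcal{D}_1(G)$ we have $ay\in\langle A\rangle$, so $a=axA+ay\in\langle A\rangle$, which completes the argument.
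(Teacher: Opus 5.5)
Your proof is correct and follows the paper's outline. Items (ii) and (iv) are immediate, (iii) and (v) for $k\ge 2$ follow from Lemma \ref{lem:claim} and Theorem \ref{thm:smithintegral}, and the case $k=1$ reduces to $\langle A\rangle+\mathcal{I}_1(AG)=\FR$ plus the absorption trick. The only difference is how you prove that last identity: you localize and reuse the normal form from the proof of Lemma \ref{lem:claim}, while the paper gets it globally in one line from $\FR=(\langle A\rangle+\mathcal{I}_1(B))\cdot(\langle A\rangle+\mathcal{I}_1(C))\subseteq\langle A\rangle+\mathcal{I}_1(CB)=\langle A\rangle+\mathcal{I}_1(AG)$, using that $\mathcal{I}_1(B)\cdot\mathcal{I}_1(C)=\mathcal{I}_1(CB)$ because $CB$ is an outer product, so no localization is needed.
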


\begin{proof}
Once again, we denote $H:=\det(A) G = A G = AD+CB$.
    \begin{itemize}
        \item[(i)] The inclusion $\langle A \rangle \subseteq \mathcal{D}_1(G)$ is obvious. For the reverse inclusion, the assumptions yield
        \[ \FR = \FR \cdot \FR = ( \langle A \rangle + \mathcal{I}_1(B)) \cdot ( \langle A \rangle + \mathcal{I}_1(C)) \subseteq \langle A \rangle + \mathcal{I}_1(CB) = \langle A \rangle + \mathcal{I}_1(H) \subseteq \FR.  \]
        Hence, there exist $x \in \mathcal{I}_1(H), y \in \FR$ such that $1=x+yA$. If $z \in \mathcal{D}_1(G)$, then $zx \in \langle A \rangle$ and thus $z=zx+zyA \in \langle A \rangle$.
        \item[(ii)] Vacuously true.
        \item[(iii)] By Lemma \ref{lem:claim}, for $k \geq 2$ it holds $\mathcal{S}_k(G) = \mathcal{S}_{k+1}(P) \subseteq \FR$, where the last inclusion follows by Theorem \ref{thm:smithintegral}. Hence, {$\mathcal{D}_k(G)=\FR$ by Definition \ref{def:311}.} 
        \item[(iv)] Taking into account that $p=1$, the proof is the same as item (iv) in Theorem \ref{prop:vectorcase}.
        \item[(v)] The statement follows by Lemma \ref{lem:claim}, because $\mathcal{N}_1(G) = A\mathcal{S}_1(G)=A \mathcal{I}_1(G)$ and, for $k \geq 2$, $\mathcal{N}_k(G) = \mathcal{S}_k(G) = \mathcal{S}_{k+1}(P)$. 
    \end{itemize}
\end{proof}

\subsection{The case of polynomial rings}

Proposition \ref{prop21} can be seen as an  algebraic abstraction, valid for more general rings, of classic observations that have been made, in the context of systems theory, for matrices over $\FR=\mathbb{C}[x_1,\dots,x_d]$ or $\FR=\mathbb{R}[x_1,\dots,x_d]$. For instance, in this specifically restricted setting, Zerz \cite{Zerz1996} already established the hierarchy between condition 1. (called Zero Primeness in \cite{Zerz1996}) and condition 3. (called {Factor} Primeness in \cite{Zerz1996}) in Proposition \ref{prop21}. Still in the context of control theory \cite{AGP2025} and over the ring of polynomials with complex coefficients, the prerequisites of Rosenbrock's Theorem are often expressed by requiring full rank of $\begin{bmatrix}
-A(x_1,\dots,x_d) & B(x_1,\dots,x_d)
\end{bmatrix}$ for all possible values of $(x_1,\dots,x_d) \in \mathbb{C}^d$ (and similarly for the block involving $-A$ and $C$); via Hilbert's Nullstellensatz, such requirement is indeed equivalent to $\mathcal{I}_p\left( \begin{bmatrix}
-A & B
\end{bmatrix} \right)=\C[x_1,\dots,x_d]$. Again in the context of polynomial rings, completability to a unimodular matrix (condition 4. in Proposition \ref{prop21}) is also equivalent to the same requirements, by the Quillen-Suslin Theorem \cite{Quillen1976}.

We can, in fact, say something more about the ideal-theoretic Rosenbrock's Theorem in the case of polynomial rings with coefficients in an algebraically closed field $\mathbb{F}$. Obviously, item 2. in Theorem \ref{thm:rosenbrock} always holds over univariate polynomials. In the multivariate case, there exist matrices for which the theorem fails; to see this, just specialize \eqref{eq:mortimer} to $r_1=x_1,r_2=x_2$ when $\FR=\mathbb{F}[x_1,\dots,x_d]$ and $d \geq 2$. Nonetheless, Proposition \ref{rem:generic} shows that, if $d \leq 3$, Theorem \ref{thm:rosenbrock} holds for \emph{almost every} square\footnote{At the price of making the proofs somewhat more technical, one could analyze rectangular matrices in an analogous manner, but we prefer to focus on the square case in order to keep this part of the paper concise.} matrix in the bivariate and trivariate cases. This fact provides some evidence that the ideal-theoretic Rosenbrock's Theorem may still be a valuable tool for applications in control theory. For some recent results on the related problem of which matrices over multivariate polynomial rings are unimodularly equivalent to a matrix in Smith form, we recommend that interested readers may consult \cite{LLW2024,LWXZ2026} and the references therein.

\begin{lemma}\label{lem:generic}
Let $\mathbb{F}$ be an algebraically closed field and $\FR=\mathbb{F}[x_1,\dots,x_d]$ where $d \leq 3$. {Fix a positive integer $p$ and a nonnegative integer $\ell$, and let $N:=p^2\binom{d+\ell}{d}$. Identify the matrices $M \in \FR^{p \times p}$ having entries of total
degree at most $\ell$ with the vectors
in $\mathbb{F}^N$ containing the coefficients of their entries. Then, there exists a nonempty Zariski-open subset
$U\subseteq\mathbb{F}^N$ such that, for every $M$ with coefficient vector in $U$, the determinantal} ideals of $M$ are {$\mathcal{I}_k(M) = \FR$} for $k<p$ and {$\mathcal{I}_p(M) = \langle \det(M) \rangle$}.
\end{lemma}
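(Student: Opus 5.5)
The plan is to reduce everything to a single condition on the $(p-1)\times(p-1)$ minors and then prove that condition by a dimension count on an incidence variety.

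\emph{Reduction.} The claim $\mathcal{I}_p(M)=\langle\det(M)\rangle$ holds for every $M$, since $\det(M)$ is the only $p\times p$ minor. For $p=1$ the condition on $k<p$ is just the convention $\mathcal{I}_k(M)=\FR$ for $k\le 0$. So assume $p\ge 2$. By Lemma \ref{lem:generalinclusions}, $\mathcal{I}_{p-1}(M)\subseteq\mathcal{I}_{p-2}(M)\subseteq\cdots\subseteq\mathcal{I}_1(M)\subseteq\FR$. Hence it suffices to show that $\mathcal{I}_{p-1}(M)=\FR$ for coefficient vectors in a nonempty Zariski-open set. Since $\mathbb{F}$ is algebraically closed, Hilbert's Nullstellensatz turns this into a geometric statement: the $(p-1)\times(p-1)$ minors of $M$ have no common zero in $\mathbb{F}^d$. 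Equivalently, $\operatorname{rank} M(\xi)\ge p-1$ for every $\xi\in\mathbb{F}^d$.

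\emph{Dimension count.} Let $W\subseteq\mathbb{F}^{p\times p}$ be the determinantal variety of matrices of rank at most $p-2$. It is irreducible of codimension $(p-(p-2))^2=4$. Consider the incidence set
\[ Z:=\{(c,\xi)\in\mathbb{F}^N\times\mathbb{F}^d \mid M_c(\xi)\in W\}, \]
where $M_c$ is the matrix with coefficient vector $c$. The set $Z$ is Zariski-closed, because it is cut out by the $(p-1)$-minors of $M_c(\xi)$, which are polynomials jointly in $(c,\xi)$.

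Now fix $\xi$. The evaluation map $c\mapsto M_c(\xi)$ is linear from $\mathbb{F}^N$ to $\mathbb{F}^{p\times p}$. It is surjective, since the constant coefficients alone already realize every matrix; this uses only $\ell\ge 0$. Therefore the fiber of $Z$ over $\xi$ is the preimage of $W$ under a surjective linear map, and it has dimension $N-4$. By the fiber-dimension theorem applied to the projection $Z\to\mathbb{F}^d$, every irreducible component of $Z$ has dimension at most $d+N-4\le N-1$, using $d\le 3$.

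\emph{Conclusion.} The projection of $Z$ to $\mathbb{F}^N$ is constructible, and the dimension bound shows its Zariski closure $\overline{\pi(Z)}$ has dimension at most $N-1$. So $\overline{\pi(Z)}$ is a proper closed subset of $\mathbb{F}^N$. Set $U:=\mathbb{F}^N\setminus\overline{\pi(Z)}$, which is nonempty and open. For $c\in U$, no $\xi$ makes $M_c(\xi)$ drop rank below $p-1$. By the reduction step, $\mathcal{I}_{p-1}(M_c)=\FR$, and hence $\mathcal{I}_k(M_c)=\FR$ for all $k<p$.

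\emph{Main obstacle.} The delicate step is the fiber-dimension bound. One has to handle the fact that $\pi(Z)$ need not be closed, which is why we pass to its closure. One also has to check that the fiber dimension $N-4$ is uniform in $\xi$, which rests on the surjectivity of evaluation. This count is exactly where the hypothesis $d\le 3$ enters: for $d\ge 4$ the bound $d+N-4$ would no longer force $\pi(Z)$ to be a proper subset.
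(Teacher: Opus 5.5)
Your proof is correct and follows essentially the same route as the paper. Both arguments use Lemma \ref{lem:generalinclusions} and the Nullstellensatz to reduce to excluding a point $y\in\mathbb{F}^d$ where $M(y)$ has rank at most $p-2$; this imposes at least $4$ independent conditions on the coefficients for each fixed $y$, and the $d\le 3$ degrees of freedom from $y$ are absorbed into the count. The difference is one of rigor rather than method: the paper stratifies by nullity $q$ and by row/column permutations and counts $q^2$ conditions via a Schur-complement factorization, whereas your incidence variety $Z$, the codimension-$4$ determinantal variety, surjectivity of evaluation and the fiber-dimension theorem make the ``independent conditions'' and closure steps fully precise.
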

\begin{proof}
{If $p=1$, we take $U=\mathbb{F}^N$; otherwise,} by the block factorization
\[  \begin{bmatrix}
X_{11} & X_{12}\\
X_{21} & X_{22}
\end{bmatrix} = \begin{bmatrix}
I & 0\\
X_{21}X_{11}^{-1} & I
\end{bmatrix}   \begin{bmatrix}
X_{11} & 0\\
0 & X_{22}-X_{21}X_{11}^{-1}X_{12}
\end{bmatrix}    \begin{bmatrix}
I & X_{11}^{-1} X_{12}\\
0 & I
\end{bmatrix},\]
we see that (up to permuting rows and columns so that the principal leading submatrix is invertible) a matrix $X \in \mathbb{F}^{p \times p}$ has nullity $q$ if and only if its elements satisfy one polynomial inequality $\det X_{11} \neq 0$ and $q^2$ rational equalities $X_{22}=X_{21}X_{11}^{-1}X_{12}$. 

Suppose $\mathcal{I}_{p-1}(M) \neq \FR$. Then, by Hilbert's Nullstellensatz, there exists $y \in \mathbb{F}^d$ that is simultaneously a zero of every $(p-1) \times (p-1)$ minor of $M$. Letting $X=M(y) \in \mathbb{F}^{p \times p}$, this implies that the nullity of $X$ is $q \geq 2$, and therefore the matrix elements of $X$ satisfy $q^2 \geq 4$ rational equations. {For each fixed $y$, these equations impose $q^2$ independent
conditions on the coefficients of $M$, and allowing $y$
to vary adds $d \leq 3$ degrees of freedom. Hence, for each of the finitely many choices
of $q$ and of the row and column permutations, the
exceptional coefficient vectors have Zariski closure
of dimension at most $N+d-q^2<N$. Taking the finite
union over these choices constructs a proper Zariski-closed subset
$\mathfrak{Z}\subsetneq\mathbb{F}^N$, and we take $U=\F^N \setminus \mathfrak{Z}$.}
\end{proof}

\begin{proposition}\label{rem:generic}
Let $\mathbb{F}$ be an algebraically closed field and $\FR=\mathbb{F}[x_1,\dots,x_d]$ where $d \leq 3$. {Fix positive integers $p,n$ and a nonnegative integer $\ell$, and let $N:=(p+n)^2\binom{d+\ell}{d}$. Identify the block system matrices $P$ in
\eqref{eq:systemmatrix}, with $m=n$ and entries of total
degree at most $\ell$, with the vectors
in $\mathbb{F}^N$ containing the coefficients of their entries. Then, there exists a nonempty Zariski-open subset
$U\subseteq\mathbb{F}^N$ such that, for every $P$ with
coefficient vector in $U$, if $P$ satisfies the assumptions
of item 2. of Theorem \ref{thm:rosenbrock}, then the
equalities (i)--(v) of Theorem \ref{thm:rosenbrock} hold. }
\end{proposition}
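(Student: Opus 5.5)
The plan is to use Lemma \ref{lem:generic} twice, once for $A$ and once for the whole matrix $P$, and to intersect the two resulting open sets.

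\textbf{Step 1: the open set.} The coordinates of $A$ form a subset of the coordinates of $P$, so the coefficient space $\F^N$ projects onto the coefficient space of $p\times p$ blocks of degree at most $\ell$. Lemma \ref{lem:generic} applied to $p\times p$ matrices gives a nonempty open set $U_A$, and its preimage under this projection is nonempty and open. Lemma \ref{lem:generic} applied to $(p+n)\times(p+n)$ matrices gives a nonempty open set $U_P$. Define
\[
U \;:=\; (\text{preimage of } U_A)\cap U_P \cap \{\det P\neq 0\}.
\]
This is a finite intersection of nonempty open subsets of the irreducible space $\F^N$, hence nonempty and open.

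For $P$ with coefficient vector in $U$ we therefore have:
\begin{itemize}
\item $\mathcal{I}_k(A)=\FR$ for $k<p$ and $\mathcal{I}_p(A)=\langle\det A\rangle$;
\item $\mathcal{I}_k(P)=\FR$ for $k<p+n$ and $\mathcal{I}_{p+n}(P)=\langle \det P\rangle$, with $\det P=\det(-A)\det G\ne 0$.
\end{itemize}
Items (ii) and (iv) follow immediately: $\mathcal{S}_k(A)=\FR$ for $k<p$, $\mathcal{S}_p(A)=\langle\det A\rangle$, and $\mathcal{S}_k(P)=\FR$ for $k<p+n$. The remaining items concern $G$, which is not polynomial, so Lemma \ref{lem:generic} does not apply to it directly.

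\textbf{Step 2: local structure of $G$.} I would compute the Smith ideals of $G$ locally at each maximal ideal $\mathfrak{m}$, as in the proof of Lemma \ref{lem:claim}, and then glue.

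First observe that over a local domain, a square matrix whose $(q-1)$-th determinantal ideal is the whole ring is unimodularly equivalent to $\mathrm{diag}(1,\dots,1,\det)$. Indeed, some entry is a unit; one eliminates it and inducts, using Theorem \ref{thm:unimodularinvariance}.

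\emph{Case $\det A\notin\mathfrak{m}$.} Then $A$ is invertible over $\FR_\mathfrak{m}$, so $P\sim \mathrm{diag}(I_p,G)$. Hence $\mathcal{I}_k(G)_\mathfrak{m}=\mathcal{I}_{k+p}(P)_\mathfrak{m}$, which equals $\FR_\mathfrak{m}$ for $k<n$ and $\langle\det P\rangle_\mathfrak{m}=\langle \det G\rangle_\mathfrak{m}$ for $k=n$. So $G$ is locally equivalent to $\mathrm{diag}(1,\dots,1,\det G)$.

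\emph{Case $\det A\in\mathfrak{m}$.} Write $A\sim\mathrm{diag}(I_{p-1},a)$ over $\FR_\mathfrak{m}$ and transform $B$ and $C$ accordingly. The identity block can be eliminated, reducing $P$ to a system matrix with $p=1$ that still satisfies the ideal coprimality assumption, because that assumption is preserved by localization and by unimodular equivalence. The local argument in the proof of Lemma \ref{lem:claim} then gives $P\sim\mathrm{diag}(I_{p+1},F)$ and $G\sim \mathrm{diag}(\alpha/a,F)$ with $\alpha$ a unit. Genericity of $P$ forces $\mathcal{I}_j(F)_\mathfrak{m}=\FR_\mathfrak{m}$ for $j<n-1$, so $F\sim\mathrm{diag}(1,\dots,1,\det F)$. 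Thus $G$ is locally equivalent to
\[
\mathrm{diag}\bigl(a^{-1},1,\dots,1,\det F\bigr)
\]
up to units, where $a$ generates $\mathcal{S}_p(A)_\mathfrak{m}$ and $\det F$ generates $\langle\det P\rangle_\mathfrak{m}$.

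In both cases $G$ is locally a diagonal matrix of the shape treated in Theorem \ref{thm:generalization}. That computation only uses the diagonal form and the divisibility chain, not the elementary divisor property. It yields locally all of (i), (iii) and (v): for instance $\mathcal{D}_1(G)_\mathfrak{m}=\langle a\rangle_\mathfrak{m}=\mathcal{S}_p(A)_\mathfrak{m}$, and $\mathcal{N}_n(G)_\mathfrak{m}=\langle\det P\rangle_\mathfrak{m}=\mathcal{S}_{p+n}(P)_\mathfrak{m}$. When $n=1$ the two special positions coincide in the single entry, which is consistent.

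\textbf{Step 3: gluing, and the main obstacle.} The step I expect to be the main obstacle is justifying that localization commutes with every operation in the definitions of $\mathcal{S}_k$, $\mathcal{D}_k$ and $\mathcal{N}_k$:
\begin{itemize}
\item ideal quotients $I:J$;
\item inverses of the fractional ideals $\mathcal{S}_k(G)=d^{-1}\mathcal{S}_k(dG)$;
\item intersections with $\FR$;
\item products.
\end{itemize}
By \cite[Theorem 3.4]{gilmer1968}, each of these localizes correctly provided the ideals being divided by are finitely generated. This holds here because $\FR=\F[x_1,\dots,x_d]$ is Noetherian, so every integral or fractional ideal occurring is finitely generated. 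The global equalities (i)--(v) then follow from the identity $I=\cap_\mathfrak{m} I_\mathfrak{m}$ \cite[Proposition 3.9]{Atiyah1969}. The remaining technical care lies in the reduction from general $p$ to $p=1$ over $\FR_\mathfrak{m}$, where I would check that the block eliminations respect the coprimality hypothesis.
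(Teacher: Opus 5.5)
Your proof is correct, but it takes a different route from the paper's.

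\emph{The paper's route.} The paper argues globally. It rewrites \cite[Theorem 4.2]{DNZ2025} as $\langle \det(A)\rangle\cdot\mathcal{I}_k(G)=\mathcal{J}_{p+k}(P)$, where $\mathcal{J}_{p+k}(P)$ is generated by the minors bordering $-A$. It then proves $\mathcal{J}_{p+k}(P)=\FR$ for $k<n$ pointwise via the Nullstellensatz, splitting on whether $A(y)$ is invertible or has rank $p-1$. In the second case it exhibits an explicit bordered minor $\det Q_0=(-1)^p bc\neq 0$ and uses the rank of a Schur complement \cite[Remark 3.4]{DNZ2025}. This yields explicit global formulas: $\mathcal{I}_k(G)=\det(A)^{-1}\FR$ for $k<n$, and hence all $\mathcal{S}_k(G)$. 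It never needs to localize a quotient or an inverse.

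\emph{Your route.} You work in each $\FR_\mathfrak{m}$. Your split ($\det A\in\mathfrak{m}$ or not) is the local counterpart of the paper's split on $A(y)$. You reduce to $p=1$ via $A\sim\mathrm{diag}(I_{p-1},a)$, reuse the local computation of Lemma \ref{lem:claim}, and glue using Noetherianity and $I=\cap_\mathfrak{m}I_\mathfrak{m}$, as in the proof of $1\Rightarrow 2$ in Theorem \ref{thm:rosenbrock}. This avoids the external results from \cite{DNZ2025} and any use of the Nullstellensatz outside Lemma \ref{lem:generic}. It also covers $n=1$ without Theorem \ref{prop:vectorcase}. The cost is the localization bookkeeping of your Step 3, which you handle correctly.

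\emph{A side benefit.} Your route shows that genericity of $P$ is used only to diagonalize $G$ (when $\det A\notin\mathfrak{m}$) or $F$ (when $\det A\in\mathfrak{m}$). Over $\FR_\mathfrak{m}$ one has $\mathcal{I}_k(\mathrm{diag}(\alpha/a,F))=a^{-1}\mathcal{I}_{k-1}(F)$, so $\mathcal{S}_k(G)_\mathfrak{m}=\mathcal{S}_{k-1}(F)=\mathcal{S}_{k+p}(P)_\mathfrak{m}$ for $k\ge 2$. Hence Theorem \ref{thm:smithintegral}, applied over the integrally closed ring $\FR_\mathfrak{m}$, could replace the diagonalization in both cases. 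This indicates that genericity of $A$ alone, i.e.\ $\mathcal{I}_{p-1}(A)=\FR$, would already suffice, even for rectangular $P$.
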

\begin{proof}
{The case $n=1$ follows from Theorem \ref{prop:vectorcase}, and hence we may assume $n\geq 2$.} Applying Lemma \ref{lem:generic} to $A$ and $P$ we get that, Zariski-generically in the coefficients of the entries of each independent block submatrix $A,B,C,D$, {i.e., for all block system matrices $P$ corresponding to points in a nonempty Zariski-open subset $U \subseteq \F^N$,} 

\begin{equation}\label{eq:genericdeterminantideals}
{\mathcal{I}_k(P) = \begin{cases}
\FR \ &\mathrm{if} \ k < n+p;\\
\langle \det(P) \rangle \ &\mathrm{if} \ k=n+p,
\end{cases}  \quad \mathcal{I}_k(A) = \begin{cases}
\FR \ &\mathrm{if} \ k < p;\\
\langle \det(A) \rangle \ &\mathrm{if} \ k=p.
\end{cases} }   
\end{equation}
{Fix now a system matrix $P$ corresponding to a point in $U$, and assume further that $\det(A) \neq 0$ and $\mathcal{I}_p\left(\begin{bmatrix}
    -A&B
\end{bmatrix}\right)=\mathcal{I}_p\left(\begin{bmatrix}
    -A\\
    C
\end{bmatrix}\right)=\FR$. For $k=1,\dots,n$, denote by $\mathcal{J}_{p+k}(P) \subseteq \mathcal{I}_{p+k}(P)$ the ideal generated by the $(p+k) \times (p+k)$ minors of $P$ bordering $-A$, i.e., determinants of submatrices of $P$ including $-A$ as a sub-submatrix. Translating \cite[Theorem 4.2]{DNZ2025} into the language of ideals, we have $\langle \det(A) \rangle \cdot \mathcal{I}_k(G) = \mathcal{J}_{p+k}(P)$. Clearly $\mathcal{J}_{p+n}(P) = \mathcal{I}_{p+n}(P) = \langle \det(P) \rangle$, and we claim that $\mathcal{J}_{p+k}(P)=\FR$ for $k < n$.} Then, the statement follows from the calculation
\[  \mathcal{S}_k(G) = \mathcal{I}_k(G) : \mathcal{I}_{k-1}(G) = \begin{cases}
\det(A)^{-1} \FR \ &\mathrm{if} \ k=1;\\
\FR \ &\mathrm{if} \ 1 < k < n;\\
\langle \det(P) \rangle \ &\mathrm{if} \  k=n.
\end{cases} \]
It remains to prove the claim, {which by Hilbert's Nullstellensatz is in turn equivalent to the following statement: For every $y \in \F^d$ and every $k =1,\dots,n-1$,  there exists a nonzero $(p+k) \times (p+k)$ minor of $P(y)$ bordering $-A(y)$. To this goal, fix $y \in \F^d$ and note that \eqref{eq:genericdeterminantideals} implies that $\mathrm{rank} \ A(y) \geq p-1$. If $A(y)$ is invertible, then $G(y)$ is defined and, combining \cite[Remark 3.4]{DNZ2025} and \eqref{eq:genericdeterminantideals}, $\mathrm{rank} \  G(y) = \mathrm{rank} \ P(y)-p \geq n-1$, implying that there is a nonzero $k \times k$ minor of $G(y)$ for all $1 \leq k < n$. By \cite[Theorem 4.2]{DNZ2025}, this in turn implies that, for all $1 \leq k < n$, there is a nonzero $(p+k) \times (p+k)$ minor of $P(y)$ bordering $-A(y)$. If instead $-A(y)$ has rank $p-1$, then there exist invertible $X,Z \in \F^{p \times p}$ such that $XA(y)Z=I_{p-1} \oplus 0$. Furthermore, $e_p^T X B(y) \neq 0$, because otherwise $e_p^T X\begin{bmatrix}
 -A(y) & B(y)   
\end{bmatrix}=0$ contradicting $\mathcal{I}_p\left(\begin{bmatrix}
    -A & B
\end{bmatrix}\right)=\FR$,  and similarly $C(y) Z e_p \neq 0$ because otherwise $\begin{bmatrix}
 -A(y) \\
 C(y)
\end{bmatrix} Z e_p=0$ which is not compatible with the assumption $\mathcal{I}_p\left(\begin{bmatrix}
    -A \\
    C
\end{bmatrix}\right)=\FR$ .  Let $Q:=(X \oplus I_n) P(y) (Z \oplus I_n)$, and observe that the minors of $Q$ bordering $-I_{p-1} \oplus 0$ are equal to the corresponding minors of $P(y)$ bordering $-A(y)$ times $\det(X)\det(Z) \neq 0$; therefore, it suffices to find some appropriate nonzero minors of $Q$ bordering $-I_{p-1} \oplus 0$. One submatrix of $Q$ including $-I_{p-1} \oplus 0$ as a sub-submatrix is
\[   Q_0=\begin{bmatrix}
    -I_{p-1} & 0 & \star\\
    0 & 0 & b\\
    \star & c & \star
\end{bmatrix} \in \F^{(p+1) \times (p+1)},   \]
where $b$ (resp. $c$) is a nonzero element of $e_p^T X B(y)$ (resp. of $C(y)Ze_p$) and the symbol $\star$ denotes sub-submatrices whose exact nature is irrelevant. A simple calculation yields $\det (Q_0) = (-1)^p bc \neq 0$. At this point, we can conclude similarly to the previous case: After permuting only the last $n$ rows and columns of $Q$,
we may assume with no loss of generality that $Q_0$ is its leading principal submatrix. In this setting, denoting by $S$ the Schur complement of $Q_0$ within $Q$, \cite[Remark 3.4]{DNZ2025} and \eqref{eq:genericdeterminantideals} imply
\[ \mathrm{rank} \ S  = \mathrm{rank} \ Q-(p+1) = \mathrm{rank} \ P(y) - (p+1) \geq n-2, \] and by \cite[Theorem 4.2]{DNZ2025} this implies in turn the existence of nonzero $h \times h$ minors of $Q$ (bordering $Q_0$) for every $h=p+2,\dots,p+n-1$. This yields the claim.
} 
\end{proof}

\subsection{Uniqueness of Smith ideals for minimal realizations over Hermite domains}

Let us define a \emph{Hermite domain} (or H-domain \cite{Lissner1965}) as an integral domain such that every right invertible matrix is completable to a unimodular matrix. {Equivalently \cite[Section I.4]{Lam2006}, it suffices to require this completion
property for right invertible row vectors. We note in passing that this terminology differs from that of Kaplansky
\cite{Kaplansky1949}, whose stronger notion of a Hermite
ring (called K-Hermite in \cite{Lam2006}) requires
diagonal reduction of every $1\times2$ and $2\times1$
matrix.}

 Theorem \ref{thm:allParethesame} states that, over a Hermite domain, even if Rosenbrock's Theorem fails then every minimal (in the sense of Theorem \ref{thm:rosenbrock}) system matrix fails in the same manner. 

\begin{theorem}\label{thm:allParethesame}
Let $\FR$ be a Hermite domain, and let $\K$ be the field of fractions of $\FR$. Fix $G \in \K^{m \times n}$ and suppose that
\[   P_1 = \begin{bmatrix}
-A_1 & B_1\\
C_1 & D_1
\end{bmatrix} \in \FR^{(m+p_1) \times (n+p_1)}, \qquad P_2 = \begin{bmatrix}
-A_2 & B_2\\
C_2 & D_2
\end{bmatrix} \in \FR^{(m+p_2) \times (n+p_2)} \]
satisfy $\det(A_1) \neq 0 \neq \det(A_2)$, $D_1 + C_1 A_1^{-1} B_1 = G=D_2+C_2 A_2^{-1} B_2$, and
\[  \mathcal{I}_{p_1}\left(\begin{bmatrix}
-A_1\\
C_1
\end{bmatrix}\right) = \mathcal{I}_{p_1}\left( \begin{bmatrix}
-A_1 & B_1
\end{bmatrix} \right) = \FR = \mathcal{I}_{p_2}\left(\begin{bmatrix}
-A_2\\
C_2
\end{bmatrix}\right) = \mathcal{I}_{p_2}\left( \begin{bmatrix}
-A_2 & B_2
\end{bmatrix} \right).   \]
Then:
\begin{enumerate}
\item Up to appending an initial sequence $\FR,\dots,\FR$ to the shorter lists, the Smith ideals of $P_1$ and $P_2$ coincide and the Smith ideals of $A_1$ and $A_2$ coincide;
\item $\mathcal{S}_k(A_i)=\FR$ for all $k=1,\dots,p_i-\min\{ m,n \}$ and $i=1,2$;
\item $\mathcal{S}_k(P_i)=\FR$ for all $k=1,\dots, p_i$ and $i=1,2$.
\end{enumerate}

\end{theorem}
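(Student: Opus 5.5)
The plan is to extract items 2 and 3 from unimodular completions, and then to prove item 1 by identifying $\operatorname{coker}P_i$ and $\operatorname{coker}A_i$ with $\FR$-modules that depend only on $G$. Fitting's lemma then does the rest.

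\emph{Items 2 and 3.} Fix $i$ and drop the subscript. By Proposition \ref{prop21}, $\begin{bmatrix} -A & B\end{bmatrix}$ is right invertible. Since $\FR$ is a Hermite domain, it is completable to a unimodular matrix, so there is a unimodular $V$ with $\begin{bmatrix} -A & B\end{bmatrix}V=\begin{bmatrix} I_p & 0\end{bmatrix}$. Then $PV=\begin{bmatrix} I_p & 0\\ C' & D'\end{bmatrix}$, and elementary row operations clear $C'$. Hence $P$ is unimodularly equivalent to $I_p\oplus T$ for some $T\in\FR^{m\times n}$. Theorem \ref{thm:unimodularinvariance} then gives
\[ \mathcal{I}_k(P)=\FR \ (k\le p), \qquad \mathcal{I}_{p+k}(P)=\mathcal{I}_k(T), \]
which is item 3.

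For item 2, let $W=\begin{bmatrix} -A & B\\ X & Y\end{bmatrix}$ be the unimodular completion. By the first part of Theorem \ref{thm:smithofinverse} (Jacobi's identity), each $k\times k$ minor of the block $-A$ of $W$ equals, up to the unit $\det W$, a complementary $(k-p)+n$ minor of the $n\times n$ trailing block of $W^{-1}$. For $k\le p-n$ this complementary order is $\le 0$. Keeping track of signs and indices, this forces $\mathcal{I}_k(A)=\FR$ for $k\le p-n$. The same argument applied to the completion of $\begin{bmatrix} -A\\ C\end{bmatrix}$ gives $\mathcal{I}_k(A)=\FR$ for $k\le p-m$. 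Together these give $\mathcal{S}_k(A)=\FR$ for $k\le p-\min\{m,n\}$.

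\emph{Item 1.} I would prove module isomorphisms
\[ \operatorname{coker}_{\FR}P\cong \FR^m/(\FR^m\cap G\FR^n), \qquad \operatorname{coker}_{\FR}A\cong (\FR^m+G\FR^n)/\FR^m, \]
where $G\FR^n\subseteq\K^m$. The right-hand sides depend only on $G$. As sanity checks, both are correct for $G=1/r$ realized by $P=\begin{bmatrix}-r&1\\1&0\end{bmatrix}$, and for $p=0$.

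Granting these, Fitting's lemma (Fitting ideals of a finitely presented module do not depend on the presentation) applies. The $j$-th Fitting ideal of $\operatorname{coker}P_i$ is $\mathcal{I}_{p_i+m-j}(P_i)$, so $\mathcal{I}_{p_1+k}(P_1)=\mathcal{I}_{p_2+k}(P_2)$ for all $k$. Together with item 3, the lists of Smith ideals of $P_1$ and $P_2$ agree after prepending copies of $\FR$. Similarly, the $j$-th Fitting ideal of $\operatorname{coker}A_i$ is $\mathcal{I}_{p_i-j}(A_i)$, so $\mathcal{I}_{p_1-j}(A_1)=\mathcal{I}_{p_2-j}(A_2)$. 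Because the Smith ideals of the shorter list are indexed from the end, this gives the claim for $A_1,A_2$.

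\emph{Proving the isomorphisms.} For $\operatorname{coker}A$, the natural map is $\FR^p\to\K^m/\FR^m$ given by $x\mapsto CA^{-1}x$. Precomposing with $B$ realizes the part $CA^{-1}B\FR^n$, which equals $G\FR^n$ modulo $\FR^m$.
\begin{enumerate}
\item Surjectivity onto $(\FR^m+G\FR^n)/\FR^m$ will use right invertibility of $\begin{bmatrix} -A & B\end{bmatrix}$: we have $I=-AX+BY$, so $A^{-1}=-X+A^{-1}BY$, and hence every $CA^{-1}x$ lies in $G\FR^n+\FR^m$.
\item Injectivity (the kernel is exactly $A\FR^p$) will use left invertibility of $\begin{bmatrix} -A\\ C\end{bmatrix}$: if $CA^{-1}x\in\FR^m$, then $\begin{bmatrix} -A\\ C\end{bmatrix}A^{-1}x$ is integral, and a left inverse shows $A^{-1}x\in\FR^p$.
\end{enumerate}
For $\operatorname{coker}P$, I would use the decomposition $P\sim I_p\oplus T$ from above and identify $\operatorname{coker}T$ with $\FR^m/(\FR^m\cap G\FR^n)$ by a similar diagram chase. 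Alternatively, I can work directly with the map $\FR^{p+m}\to\K^m/G\FR^n$, $(x,y)\mapsto y+CA^{-1}x$, and check that its kernel is $P\FR^{p+n}$ using the same two coprimality conditions.

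I expect this kernel/image computation for $P$ to be the main obstacle, since it must correctly handle the case where $G$ is not injective. If a direct chase becomes messy, I would fall back on checking the isomorphisms after localizing at each maximal ideal and then gluing.
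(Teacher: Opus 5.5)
Your argument is correct in substance and takes a genuinely different route from the paper.

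\textbf{The two routes.} The paper uses the Hermite property to complete $\begin{bmatrix}-A_i & B_i\end{bmatrix}$ to unimodular matrices. This reduces $P_i$ to $I_p\oplus Z_i$ and $-A_i\oplus I_n$ to $I_p\oplus U_{22}$ (resp.\ $I_p\oplus V_{22}$). It then reads off $G=Z_1U_{22}^{-1}=Z_2V_{22}^{-1}$ as two coprime factorizations and uses B\'ezout identities to show that $U_{22}^{-1}V_{22}$ is unimodular. You instead identify $\operatorname{coker}A_i$ with $(\FR^m+G\FR^n)/\FR^m$ and $\operatorname{coker}P_i$ with $(\FR^m+G\FR^n)/G\FR^n\cong\FR^m/(\FR^m\cap G\FR^n)$, and then invoke invariance of Fitting ideals.

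\textbf{The step you flagged.} It takes two lines, and injectivity of $G$ plays no role. Suppose $y+CA^{-1}x=Gv$ with $v\in\FR^n$, and put $u:=A^{-1}(Bv-x)$. Then $-Au=x-Bv$ and $Cu=y-Dv$ are integral, so $u\in\FR^p$ by left invertibility of $\begin{bmatrix}-A\\ C\end{bmatrix}$, and $P\begin{bmatrix}u\\ v\end{bmatrix}=\begin{bmatrix}x\\ y\end{bmatrix}$. The image is handled by your surjectivity computation.

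\textbf{What each route buys.} The paper's route yields a matrix-level statement: $P_1$ is unimodularly equivalent to $I_{p_1-p_2}\oplus P_2$, with explicit coprime factorizations of $G$. Yours buys generality. Both of your isomorphisms use only right invertibility of $\begin{bmatrix}-A & B\end{bmatrix}$ and left invertibility of $\begin{bmatrix}-A\\ C\end{bmatrix}$, which Proposition~\ref{prop21} provides over \emph{any} integral domain. Item 3 also follows directly from $\FR=\mathcal{I}_p(\begin{bmatrix}-A & B\end{bmatrix})\subseteq\mathcal{I}_p(P)$. So, once item 2 is repaired as below, your proof never needs completions. It therefore establishes the theorem without the Hermite hypothesis, which answers the question raised in the remark after the theorem.

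In your framework items 2 and 3 are even automatic. $\operatorname{coker}A$ is generated by the images of the $n$ columns of $G$, and $\operatorname{coker}P$ by $m$ elements. Hence $\mathcal{I}_{p-n}(A)=\mathcal{I}_p(P)=\FR$, and transposing the realization gives $\mathcal{I}_{p-m}(A)=\FR$.

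\textbf{The incorrect step.} The one wrong step is your Jacobi argument for item 2. Jacobi's complementary-minor identity pairs a $k\times k$ minor of the $-A$ block of $W$ with a minor of $W^{-1}$ of order $p+n-k$. Both its row and column index sets contain the last $n$ indices. For $k<p$ this order exceeds $n$, so it is not a minor of the trailing $n\times n$ block $Y_{22}$ of $W^{-1}$. The minor-by-minor correspondence you state therefore fails exactly in the range $k\le p-n$ that you need.

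The relation $\mathcal{I}_k(A)=\mathcal{I}_{k-p+n}(Y_{22})$ does hold, but only at the level of ideals. It comes from the unimodular equivalence of $-A\oplus I_n$ and $I_p\oplus Y_{22}$, as in the paper. The simplest repair is also the paper's: a generalized Laplace expansion gives $\FR=\mathcal{I}_p(\begin{bmatrix}-A & B\end{bmatrix})\subseteq\mathcal{I}_{p-n}(A)$, with no completion needed.
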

\begin{proof}
Suppose with no loss of generality $m \geq n$ (up to transposing $G$, $P_1$ and $P_2$) and $p_1 \geq p_2$ (up to switching $P_1$ and $P_2$). It is clear that it suffices to prove the statement for $P_1$ and $P_3 = I_{p_1-p_2} \oplus P_2$, and hence we may assume $p_1=p_2=:p$.

Since $\begin{bmatrix}
-A_1 & B_1
\end{bmatrix}$ is right invertible and $\FR$ is Hermite, it can be completed to a unimodular matrix $U \in \FR^{(p+n) \times (p+n)}$. Partition 
$ \displaystyle U^{-1} = \begin{bmatrix}
U_{11} & U_{12}\\
U_{21} & U_{22}
\end{bmatrix} \in \FR^{(n+p) \times (n+p)}$, where $U_{11} \in \FR^{p \times p}, U_{22} \in \FR^{n \times n}$, and observe that defining $Z_1:=C_1 U_{12} + D_1 U_{22}$ we have
\[  \begin{bmatrix}
I_p & 0\\
-C_1 U_{11} - D_1 U_{21} & I_m
\end{bmatrix}      P_1 U^{-1} = \begin{bmatrix}
I_p & 0\\
0 & Z_1
\end{bmatrix}.\]
By Theorem \ref{thm:unimodularinvariance}, this shows that the Smith ideals of $P_1$ are $\FR,\dots,\FR, \mathcal{S}_1(Z_1),\dots,\mathcal{S}_n(Z_1)$. Moreover, we have the unimodular equivalence
\[ \begin{bmatrix}
    I_p & 0\\
    -U_{21} & I_n
\end{bmatrix} \begin{bmatrix}
    I_p & B_1\\
    0 & I_n
\end{bmatrix}   \begin{bmatrix}
    -A_1 & 0\\
    0 & I_n
\end{bmatrix}  U^{-1}  =  \begin{bmatrix}
    I_p & 0\\
    0 & U_{22}
\end{bmatrix}  ,\]
which implies via Theorem \ref{thm:unimodularinvariance} that $\mathcal{S}_{p-k}(A_1)=\mathcal{S}_{n-k}(U_{22})$ for all $k=0,\dots, \min\{p,n\}-1 $. 
If $n \geq p$, this suffices to characterize the Smith ideals of $A_1$. If instead $n < p$, we can observe that $\displaystyle \FR = \mathcal{I}_p (\begin{bmatrix}
-A_1 & B_1
\end{bmatrix}) \subseteq \mathcal{I}_{p-n}(A_1)$, and hence $\mathcal{S}_1(A_1)=\dots=\mathcal{S}_{p-n}(A_1)=\FR$.

We now relate $Z_1$ and $G$. Since $A_1 U_{12} = B_1U_{22}$ and $\det(A_1) \neq 0$, if $x \in \K^{n}$ satisfies $U_{22} x = 0$ then $U_{12} x = 0$, implying in turn $\displaystyle U^{-1}\begin{bmatrix}
0\\
x
\end{bmatrix}=0$ and hence $x=0$. It follows that $U_{22}$ is invertible over $\K$ and $G=Z_1 U_{22}^{-1}$.

By applying the exact same arguments to $P_2$, we can find $V_{12} \in \FR^{p \times n},V_{22} \in \FR^{n \times n}$ and $Z_2 = C_2 V_{12} + D_2 V_{22} \in \FR^{m \times n}$ such that (i) $\begin{bmatrix}
V_{12}\\
V_{22}
\end{bmatrix}$ is a submatrix of a unimodular matrix $V^{-1}$ (ii) the Smith ideals of $P_2$ are equal to $\FR,\dots,\FR,$ $\mathcal{S}_1(Z_2),\dots,\mathcal{S}_{n}(Z_2)$  (iii) the Smith ideals of $A_2$ are determined by those of $V_{22}$ in the same manner as for $A_1$ and $U_{22}$ (iv) $G=Z_2 V_{22}^{-1} = Z_1 U_{22}^{-1}$. We claim that there exists a unimodular matrix $W \in \FR^{n \times n}$ such that $V_{22} =U_{22} W$ and $Z_2 = Z_1 W$, implying the statement by invoking Theorem \ref{thm:unimodularinvariance}.

It remains to prove the claim. Observe first that by (i) there exist $X_1 \in \FR^{n \times p},X_2 \in \FR^{n \times n}$ satisfying $X_1 V_{12} + X_2 V_{22} = I_n$. Similarly, by Proposition \ref{prop21}, there exist matrices $X_3 \in \FR^{p \times p},X_4 \in \FR^{p \times m}$ such that $-X_3 A_2 + X_4 C_2 = I_p$. Hence, using $A_2 V_{12} = B_2 V_{22}$,
\[  I_n =  X_1 (X_4 C_2 - X_3 A_2) V_{12} + X_2 V_{22} = X_1 X_4 C_2 V_{12} + (X_2 - X_1 X_3 B_2) V_{22}  = X_5 Z_2 + X_6 V_{22},    \]
where in the last step we have added and subtracted $X_1 X_4 D_2 V_{22}$ and defined $X_5: = X_1 X_4, X_6:=X_2 - X_1 X_3 B_2 - X_1 X_4 D_2$. A similar argument shows $X_7 Z_1 + X_8 U_{22}=I_n$ for some matrices $X_7,X_8$ having elements in $\FR$. Define now $W = U_{22}^{-1} V_{22}$. Then, $Z_1 U_{22}^{-1} = Z_2 V_{22}^{-1}$ implies $Z_2 = Z_1 W$. Moreover, 
\[  W = (X_7 Z_1 + X_8 U_{22}) W = X_7 Z_2 + X_8 V_{22} \in \FR^{n \times n}\]
and
\[ W^{-1} = (X_5 Z_2 + X_6 V_{22}) W^{-1} = X_5 Z_1 + X_6 U_{22} \in \FR^{n \times n}.\]  
\end{proof}

\begin{remark}
The Quillen-Suslin Theorem \cite{Quillen1976, Suslin1976} implies that $\mathbb{D}[x_1,\dots,x_d]$ is a Hermite domain for every positive integer $d$ and every principal ideal domain $\mathbb{D}$. Hence, Theorem \ref{thm:allParethesame} applies in particular to polynomial rings that are of interest in engineering applications. Although our proof of Theorem \ref{thm:allParethesame} heavily relies on the fact that a right invertible matrix is completable to a unimodular matrix, we have not been easily able to construct a counterexample over an integral domain which is not Hermite. It is an interesting question for future research to determine whether Theorem \ref{thm:allParethesame} holds for a wider class of integral domains or not.
\end{remark}

\section*{Acknowledgements}

A scientific discussion that took place at the ILAS 2026 conference, and was initiated by Victor Gosea, motivated me to study a possible generalization of Rosenbrock's Theorem to rings where the Smith form is not guaranteed to exist. This conversation also involved Athanasios Antoulas, Etna Lindy, and Paul Van Dooren. I also thank Laura Cossu, Froil\'{a}n Dopico and Alfred Geroldinger for some comments on a preliminary version of this document. {Finally, I am grateful to an anonymous reviewer who shared very useful remarks.}

\end{document}